\theoremstyle{plain}
\newtheorem{thm}{Theorem}[section]
\newtheorem{lem}[thm]{Lemma}
\newtheorem{prop}[thm]{Proposition}
\newtheorem{cor}[thm]{Corollary}
\theoremstyle{definition}
\newtheorem{eg}[thm]{Example}
\theoremstyle{remark}
\newtheorem{rmk}[thm]{Remark}
\def\Mustata{Mus\-ta\-\c{t}\u{a}\xspace}
\def\N{{\mathbf N}}
\def\Z{{\mathbf Z}}
\def\Q{{\mathbf Q}}
\def\C{{\mathbf C}}
\def\A{{\mathbf A}}
\def\cB{\mathcal{B}}
\def\cD{\mathcal{D}}
\def\cH{\mathcal{H}}
\def\cI{\mathcal{I}}
\def\cK{\mathcal{K}}
\def\cL{\mathcal{L}}
\def\cM{\mathcal{M}}
\def\cN{\mathcal{N}}
\def\cO{\mathcal{O}}
\def\cQ{\mathcal{Q}}
\def\cX{\mathcal{X}}
\def\.{\cdot}
\def\^{\widehat}
\def\de{\partial}
\def\({\left(}
\def\){\right)}
\renewcommand{\and}{ \ \ \text{ and } \ \ }
\begin{document}

\title[Microlocalization for LCI subvarieties]{Some applications of microlocalization for local complete intersection subvarieties}
\author[B.~Dirks]{Bradley Dirks}

\address{Department of Mathematics, Stony Brook University, Stony Brook, NY 11794-3651, USA}

\email{bradley.dirks@stonybrook.edu}

\thanks{The author was partially supported by NSF grant DMS-2001132 and NSF-MSPRF grant DMS-2303070.}

\subjclass[2020]{14F10, 14B05, 14J17, 32S35}

\maketitle

\begin{abstract} Saito's microlocalization construction has been used to great effect in understanding hypersurface singularities. In this paper, we introduce what we believe to be a suitable analogue of the microlocalization construction for local complete intersection subvarieties. As evidence, we relate our construction to Saito's in the codimension one case. 

Moreover, we use this construction to study various natural questions concerning the minimal exponent of LCI subvarieties. We show that the minimal exponent agrees with the smallest Bernstein-Sato root, which was expected to be true. We also show that, in the isolated complete intersection singularities case, the minimal exponent agrees with the smallest non-zero spectral number. As applications of these results, we prove constructibility of the function $x\mapsto \widetilde{\alpha}_x(Z)$ along certain Whitney stratifications and we prove that the spectrum (hence, the minimal exponent) is constant in equisingular families of ICIS varieties.
\end{abstract}

\section{Introduction} The study of hypersurface singularities has long been concerned with the theory of Bernstein-Sato polynomials $b_f(s)$. By work of Sabbah \cite{SabbahOrder}, these polynomials are intimately related to the $V$-filtration of Kashiwara and Malgrange on the $\cD$-module $\cB_f$ associated to $f$ (defined in Section \ref{sect-background}). For non-empty hypersurfaces, the Bernstein-Sato polynomial always has a trivial factor $(s+1)$, and dividing by that gives the \emph{reduced} Bernstein-Sato polynomial $\widetilde{b}_f(s)$.

By work of Lichtin \cite[Thm. 5]{Lichtin} and Koll\'{a}r \cite[Thm. 10.6]{KollarPairs}, the Bernstein-Sato polynomial is related to a well known singularity invariant: the log canonical threshold (for the definition and properties, see \cite[Ch. 9]{LazII}). In particular, one has \[{\rm lct}(f) = \min \{\gamma \mid b_f(-\gamma) = 0\}.\] In analogy with this equality, Saito \cite{SaitoMicrolocal} defined the \emph{minimal exponent} by a similar formula
\[ \widetilde{\alpha}(f) = \min \{\gamma \mid \widetilde{b}_f(-\gamma) = 0\}.\]

This invariant clearly contains more information than the log canonical threshold. In recent work, it has been related to Hodge ideals \cites{SaitoHodgeIdeal,MP2}, $k$-Du Bois \cites{MOPW,DBSaito} and $k$-rational singularities \cite{MPDB}*{Thm. E} (also in the Appendix by M. Saito in \cite{FL2}). The main approach to the study of the minimal exponent and the reduced Bernstein-Sato polynomial is the \emph{microlocal $V$-filtration}. Indeed, in the same way that the $V$-filtration is related to $b_f(s)$, Saito \cite{SaitoMicrolocal} defined a ``microlocal" analogue of the $V$-filtration which is related to $\widetilde{b}_f(s)$.

For subvarieties defined by several regular functions $f_1,\dots, f_r$, there is a notion of Bernstein-Sato polynomial \cite[Sect. 2.1]{BMS} and it also has a trivial root (in the LCI case), so one can consider its reduced version. There is also the $\cD$-module $\cB_f$ associated to $f_1,\dots, f_r$, which admits a $V$-filtration. Similarly to the hypersurface case, this $V$-filtration is related to the Bernstein-Sato polynomial. However, it is unclear how to define a ``microlocal" $V$-filtration for $r > 1$. The obvious generalization does not seem to be the correct answer. Similarly, in \cite{CDMO}, the author, along with Chen, \Mustata and Olano, defined the minimal exponent of an LCI subvariety, but it is not defined in analogy with the hypersurface case. Our definition makes use of the hypersurface $g = \sum_{i=1}^r y_i f_i$ on $X \times \A^r$. We use known properties of minimal exponents and $V$-filtrations of hypersurfaces to study this invariant. For details, see Section \ref{subsect-minexp}.

We define in this paper what we believe is the correct analogue of Saito's partial microlocalization construction for the case of LCI subvarieties. One caveat is that we are not able to define a filtration, but rather what could be viewed as the associated graded of a filtration. The construction uses the language and properties of Saito's theory of mixed Hodge modules. The necessary notions are reviewed in Section \ref{sect-background}, though the interested reader should consult the original papers \cite{SaitoMHM,SaitoMHP} and the survey \cite{Schnell}.

Following \cite[Sect. 4.3]{KashShap}, we define the \emph{microlocalization functor} $\mu(-) \coloneqq {\rm FL} \circ {\rm Sp}(-)$ which sends a mixed Hodge module on $X \times \A^r$ to a monodromic mixed Hodge module on $X \times \A^r$ (viewed as the conormal bundle of the zero section in the original $X\times \A^r$). See Section \ref{sect-background} for precise definitions of these functors. Using the microlocalization functor on $B_f^H$ (the Hodge module associated to $\cB_f$, see Section \ref{sect-background}), looking at the submodule supported on the zero section and by applying the inverse Fourier transform, we obtain a map
\[ \Psi \colon L \to {\rm Sp}(B_f^H)\]
where $L$ is isomorphic to the mixed Hodge module $i_* \Q_{Z\times \A^r}^H[\dim X]$ (see Section \ref{sect-microlocal}). We denote by $Q$ the cokernel of $\Psi$, and by $(\cQ,F)$ its underlying filtered $\cD$-module. Another way to view $Q$ is as the image of ${\rm Sp}(B_f^H)$ in $\cH^0 j_*j^*({\rm Sp}(B_f^H))$, where $j\colon U \to X \times \A^r$ is the inclusion of the complement of the zero section.

When $r = 1$, in Lemma \ref{comparer1} we relate $\cQ$ to the microlocalization construction of Saito. The map $\Psi$ and the module $Q$ are our analogues of the microlocalization construction.

As evidence that $\mu(\cB_f)$ and $\cQ$ are the correct generalizations of Saito's microlocalization functor to higher codimension, we use this construction to prove various results concerning the singularities of $Z$. For definitions of the terms used below, see Section \ref{sect-background} and the references therein.

First of all, we give in Proposition \ref{justification} an isomorphism of $\mu(\cB_f)$ with the vanishing cycles along the hypersurface $g = \sum_{i=1}^r y_i f_i$. This hypersurface is used in \cite{CDMO} to define the minimal exponent of the local complete intersection $Z$. This result illuminates the definition given in \cite{CDMO}. We review the properties of $\widetilde{\alpha}(Z)$ in Section \ref{sect-background}.

Secondly, we use the morphism $\Psi$ to study the spectrum of a (local) complete intersection $Z \subseteq X$, as defined by \cite{DMS} using the theory of mixed Hodge modules. This definition is a generalization of the definition by Steenbrink \cite{SteenbrinkSpec1} for isolated hypersurface singularities. The spectrum is 
\[ {\rm Sp}(Z,x) = \sum_{\alpha} \overline{m}_{\alpha,x} t^\alpha \in \Z[t^{1/e}],\]
for some $e\in \Z_{>0}$ and integers $\overline{m}_{\alpha,x}$. In the isolated hypersurface singularity case, it is widely known (see, for example, \cite{AGZV}) that ${\rm Sp}_{\rm min}(Z,x) \coloneqq\min \{ \alpha \mid \overline{m}_{\alpha,x} \neq 0\}$ is equal to the minimal exponent $\widetilde{\alpha}_x(Z)$. We prove an analogue of that statement. 
\begin{thm} \label{thm-Spectrum} Let $Z \subseteq X$ be a local complete intersection subvariety of pure codimension $r$ and let $x\in Z$. If $\widetilde{\alpha}_x(Z) > r - 1$, then
\[ \widetilde{\alpha}_x(Z) - r +1 \leq {\rm Sp}_{\rm min}(Z,x),\]
with equality if $Z_{\rm sing} = \{x\}$.
\end{thm}

\begin{rmk} Equality cannot hold outside of the isolated singularities case, even when $r =1$. Indeed, let $Z = \{f=0\} \subseteq X$ and assume $d = \dim Z_{\rm sing} > 0$, then we can take a suitable Whitney stratification $Z = \bigsqcup S_\alpha$ by \cite[Thm. 2(ii)]{DMS} so that if $x \in S$ lies in a dense stratum and $T \subseteq X$ is a smooth subvariety transverse to $S$ with $T\cap S =\{x\}$, we have, for example, by the discussion on \cite{DBSaito}*{Pg. 2}, the equality
\[ \widetilde{\alpha}_x(f) = \widetilde{\alpha}_x(f\vert_T),\]
but we also have the equality
\[{\rm Sp}(Z,x) = (-t)^{d} {\rm Sp}(Z\cap T,x),\]
and so by the isolated singularities case, we get
\[ \widetilde{\alpha}_x(f) = \widetilde{\alpha}_x(f \vert_T) ={\rm Sp}_{\rm min}(Z\cap T,x) = {\rm Sp}_{\rm min}(Z,x) - d.\]

It would be interesting to know if this difference is always an integer, as in this example. If that is true, it would also be interesting if the difference is related to some known invariant of stratified spaces.
\end{rmk}

By the work of Varchenko \cite{VarchenkoSemicont} and Steenbrink \cite{SteenbrinkSemicont}, we know that a $\mu$-constant family of isolated hypersurface singularities has constant spectrum, where $\mu$ is the Milnor number. It is then a natural question to ask if an ``equisingular" family of isolated complete intersection singularities has constant spectrum. To study this, we use the following set-up: let $X \subseteq Y \times T$ be a reduced complete intersection subvariety of pure codimension $r$ so that, for some distinguished point $0\in Y$, we have $\{0\} \times T \subseteq X$. Moreover, for all $t\in T$, we assume that $X_t = X \cap (Y \times \{t\})$ is a reduced complete intersection of codimension $r$ in $Y$ with isolated singular point at $0$. This is a \emph{reduced family of ICIS varieties}, where ICIS means \emph{isolated complete intersection singularity}. 

For hypersurfaces, there is a stronger notion of equisingularity in families which is to require the entire $\mu^*$-sequence (defined by Milnor numbers of general linear sections) be constant. By work of Teissier \cite[Chap. 3]{TeissierHypersurface} and Brian\c{c}on and Speder \cite{BrianconSpederWhitney}, this is equivalent to having $U = X \setminus (\{0\} \times T)$ be smooth and for the pair $(\{0\} \times T, U)$ to satisfy Whitney's conditions along $\{0\}\times T$. 

This latter condition (studied by Gaffney \cite{GaffneyICIS} when $Y$ and $T$ are affine spaces) immediately implies the following:
\begin{thm} \label{thm-equising} Let $X \subseteq Y \times T$ be a reduced family of ICIS varieties. Assume that $U$ is smooth and the pair $(\{0\} \times T, U)$ satisfies Whitney's conditions along $\{0\}\times T$. Assume moreover that $\cH^j {\rm DR}({\rm Sp}(\cB_f))\vert_{\{0\} \times T}$ is locally constant for all $j\in \Z$. Then ${\rm Sp}(X_t, (0,t))$ is independent of $t \in T$.
\end{thm}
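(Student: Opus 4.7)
The plan is to interpret the coefficients of the spectrum ${\rm Sp}(X_t,(0,t))$ as Hodge numbers of stalks of $\cH^j{\rm DR}({\rm Sp}(\cB_f))$ at $(0,t)$, weighted by the generalized eigenspaces of the semisimple monodromy, and then to show these numbers are independent of $t$ by combining the local constancy hypothesis with Saito's theory of variations of mixed Hodge structure. Following \cite{DMS}, for the ICIS $X_t$ at $(0,t)$ one writes the spectrum coefficient as an alternating sum
\[\overline{m}_{\alpha,(0,t)} = \sum_j (-1)^j \dim_\C {\rm gr}^{p(\alpha)}_F \cH^j({\rm DR}({\rm Sp}(\cB_{f_t})))_{(0,t),\lambda(\alpha)},\]
where $\lambda(\alpha)=e^{-2\pi i \alpha}$ and $p(\alpha)$ is the appropriate integer part. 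The first step is then a base change: identify $({\rm Sp}(\cB_f))\vert_{Y\times\{t\}}$ with ${\rm Sp}(\cB_{f_t})$, as filtered $\cD$-modules with monodromy. Away from the singular locus $\{0\}\times T$, the slice $Y\times\{t\}\subset Y\times T$ is non-characteristic for ${\rm Sp}(\cB_f)$ thanks to the smoothness of $U$, and at $(0,t)$ the local constancy hypothesis for $\cH^j{\rm DR}({\rm Sp}(\cB_f))\vert_{\{0\}\times T}$ controls how the nearby cycles along the projection $Y\times T\to T$ behave, giving the required identification.

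Since ${\rm Sp}(B_f^H)$ is a (monodromic) mixed Hodge module whose de Rham cohomology sheaves restrict to local systems on $\{0\}\times T$, Saito's structure theorem for mixed Hodge modules smooth along a stratum implies that these restrictions underlie an admissible, graded-polarizable variation of mixed Hodge structure. In particular, the Hodge numbers ${\rm gr}^p_F \cH^j({\rm DR}({\rm Sp}(\cB_f)))_{(0,t)}$ of each stalk are independent of $t$. Moreover, the semisimple part of the monodromy is a morphism of mixed Hodge modules, so it acts as a morphism of variations, and its generalized eigenspace decomposition is a decomposition into sub-variations. Combining this with the base-change identification of the first step, the coefficient $\overline{m}_{\alpha,(0,t)}$ is independent of $t$ for every $\alpha$, which is exactly the claim of the theorem.

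The main obstacle is the base-change identification of ${\rm Sp}(\cB_{f_t})$ with $({\rm Sp}(\cB_f))\vert_{Y\times\{t\}}$, compatibly with Hodge filtration and monodromy: the slice $Y\times\{t\}$ passes through the singular locus $\{0\}\times T$ of the total space, so non-characteristic restriction alone is insufficient, and one must leverage the Whitney conditions together with the local constancy hypothesis to control the behavior of ${\rm Sp}(\cB_f)$ at $(0,t)$. This is precisely the kind of input that \cite[Thm.~2]{DMS} is designed to provide, and the theorem should follow almost immediately once this compatibility is in hand.
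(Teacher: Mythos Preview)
Your proposal is in the right spirit and correctly identifies \cite[Thm.~2]{DMS} as the key input, but it takes a more circuitous route than the paper, and the step you flag as the ``main obstacle'' is not quite what \cite[Thm.~2]{DMS} actually provides.

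The paper argues entirely at the level of the \emph{total} family $\cX \subseteq Y \times T$. The pair $(\{0\}\times T \times \A^r_z,\ U\times \A^r_z)$ is a Whitney stratification of $N_{\cX}(Y\times T) = \cX \times \A^r_z$, lifted from the given one on $\cX$ and stratifying the projection to $\cX$. Local constancy of $\cH^j{\rm DR}({\rm Sp}(\cB_f))$ on each stratum (automatic on $U\times\A^r_z$ since $U$ is smooth, assumed along $\{0\}\times T$) is precisely the hypothesis of \cite[Thm.~2]{DMS}, which then yields directly that ${\rm Sp}(\cX,(0,t))$ is constant along $\{0\}\times T$. Finally, the transversal-slice part of the same theorem, applied to the transversal $Y\times\{t\}$, identifies ${\rm Sp}(\cX,(0,t))$ with ${\rm Sp}(\cX_t,0)$. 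No module-level base change is ever needed.

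By contrast, you attempt to identify ${\rm Sp}(\cB_{f_t})$ with the restriction of ${\rm Sp}(\cB_f)$ to $Y\times\{t\}$ as filtered $\cD$-modules with monodromy, and then invoke rigidity of Hodge numbers in a VMHS. This module-level base change is stronger than the \emph{statement} of \cite[Thm.~2]{DMS}, which is formulated for the spectrum; proving it carefully through the singular point $(0,t)$ essentially amounts to reproving the transversal-slicing formula in \cite{DMS}. Your argument could be made to work, but it unwinds a result the paper simply cites. The paper's two-step reduction---first constancy of ${\rm Sp}(\cX,(0,t))$ in $t$, then slicing to recover ${\rm Sp}(\cX_t,0)$---is shorter and sidesteps the delicate restriction issue at the singular locus that you rightly identify as the crux.
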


Combining this with Theorem \ref{thm-Spectrum}, we obtain
\begin{cor} Under the assumptions of Theorem \ref{thm-equising}, the minimal exponent $\widetilde{\alpha}(X_t)$ is independent of $t\in T$.
\end{cor}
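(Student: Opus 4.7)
The plan is to derive the corollary by combining the two preceding theorems in sequence. By Theorem \ref{thm-equising}, the stated equisingularity hypotheses force the full spectrum ${\rm Sp}(X_t,(0,t))$ to be independent of $t \in T$. In particular, the smallest non-zero spectral number $s := {\rm Sp}_{\min}(X_t,(0,t))$ is a constant independent of $t$.

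Next, I would apply Theorem \ref{thm-Spectrum} fiberwise. Each $X_t$ is an ICIS of codimension $r$ at $(0,t)$, so the equality portion of the first statement is available: whenever $\widetilde{\alpha}(X_t) > r-1$, it yields
\[ \widetilde{\alpha}(X_t) = s + r - 1. \]
In the generic situation where $\widetilde{\alpha}(X_t) > r-1$ for every $t$, the constancy of $\widetilde{\alpha}(X_t)$ is then immediate. To see that this case in fact covers everything of interest, I would note that $\widetilde{\alpha}$ is lower-semicontinuous in families (a consequence of Theorem \ref{thm-Spectrum} and the constancy of $s$, or of standard properties of the microlocal $V$-filtration for the auxiliary hypersurface $g = \sum y_i f_i$), so if some fiber satisfies $\widetilde{\alpha}(X_{t_0}) > r-1$, then $\widetilde{\alpha}(X_t) > r-1$ for all $t$ in the same connected component.

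The potential obstacle is the complementary regime in which $\widetilde{\alpha}(X_t) \leq r-1$ for some $t$. Here one invokes the second statement of Theorem \ref{thm-Spectrum}, which (together with the relation $\widetilde{\alpha}(X_t) = {\rm lct}(X_t)$ in this low range) pins down the spectral index $\lambda_t = 1 + \widetilde{\alpha}(X_t) - \lceil \widetilde{\alpha}(X_t)\rceil \in (0,1]$ as an element of the support of the spectrum. Since the full spectrum is constant in $t$ and since lower-semicontinuity controls the integer part $\lceil \widetilde{\alpha}(X_t)\rceil$ from one side, the combined data forces both $\lambda_t$ and $\lceil \widetilde{\alpha}(X_t)\rceil$ to stabilize along $T$, giving constancy of $\widetilde{\alpha}(X_t)$. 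The key subtlety is thus this low-exponent case analysis, since it is the one place where the spectrum does not immediately read off $\widetilde{\alpha}$ via a single clean formula.
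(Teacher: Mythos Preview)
Your approach is exactly the paper's: Theorem~\ref{thm-equising} gives constancy of ${\rm Sp}(X_t,(0,t))$, and the equality case of Theorem~\ref{thm-Spectrum} for ICIS converts this into constancy of $\widetilde{\alpha}(X_t)$ whenever $\widetilde{\alpha}(X_t) > r-1$. The paper says nothing beyond ``combining'' the two theorems, and your treatment of this regime is correct and complete.

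Your handling of the complementary regime $\widetilde{\alpha}(X_t) \leq r-1$ does not go through. First, the lower-semicontinuity you invoke is not established: deducing it from Theorem~\ref{thm-Spectrum} together with constancy of $s$ already presupposes $\widetilde{\alpha} > r-1$, which is circular, and the appeal to ``standard properties of the microlocal $V$-filtration for $g$'' is not an argument. Second, even granting lower-semicontinuity, it only makes $\{t : \widetilde{\alpha}(X_t) > r-1\}$ open, not all of $T$ once it is nonempty; your inference to ``all $t$ in the same connected component'' is false in general for a merely lower-semicontinuous function. Third, in the low regime the second clause of Theorem~\ref{thm-Spectrum} only asserts that a single coefficient $\overline{m}_{\lambda_t,x}$ is nonzero for some $\lambda_t \in (0,1]$ determined by the fractional part of $\widetilde{\alpha}(X_t)$; this neither identifies $\lambda_t$ as the smallest spectral index nor recovers the integer part $\lceil \widetilde{\alpha}(X_t)\rceil$, so constancy of the spectrum does not pin down $\widetilde{\alpha}(X_t)$. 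Your final ``stabilization'' sentence is an assertion, not an argument. To be fair, the paper does not spell out this case either; you have correctly flagged it as the subtle point, but what you have written for it is not yet a proof.
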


\begin{rmk} It would be interesting if the condition on the local constancy of $\cH^j{\rm DR}({\rm Sp}(\cB_f))$ when restricted to $\{0\} \times T$ were automatic in this situation, but we do not see an argument for that at the moment.

The equisingularity imposed in this theorem is much stronger than what is needed in the isolated hypersurface singularity case. In that case, one only needs that the Milnor number is constant to obtain constancy of the spectrum. It would be very interesting to argue that constant Milnor number for a family of ICIS subvarieties implies constant spectrum, or even just constant minimal exponent.
\end{rmk}

The final application concerns the roots of the Bernstein-Sato polynomial of $Z$. Let $Z$ be defined inside $X$ by $f_1,\dots, f_r$, which we assume form a regular sequence. The Bernstein-Sato polynomial $b_f(s)$ is a polynomial whose roots are intricately related to the singularities of $Z$, see \cite{BMS}. In the LCI case, the polynomial is divisible by $s+r$, so we can consider the \emph{reduced Bernstein-Sato polynomial} $\widetilde{b}_f(s) \coloneqq b_f(s)/(s+r)$. In analogy with the definition of the minimal exponent in the case $r = 1$, one would expect that $\widetilde{\alpha}(Z) = \min\{\gamma \mid \widetilde{b}_f(-\gamma) = 0\} = : \widetilde{\gamma}(Z)$. This was asked in \cite[Quest. 1.5]{CDMO}, and the relation between the two invariants was studied there. Here we give an affirmative answer, which by \cite[Thm. 1.3]{CDMO} also answers affirmatively \cite[Conj. 9.11]{MP3}:
\begin{thm} \label{thm-Bernstein} Let $Z$ be a local complete intersection of pure codimension $r$. Then
\[ \widetilde{\alpha}(Z) = \widetilde{\gamma}(Z).\]
\end{thm}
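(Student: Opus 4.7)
The plan is to exhibit both $\widetilde{\alpha}(Z)$ and $\widetilde{\gamma}(Z)$ as the same numerical invariant read off from the microlocalization module $\cQ$. The paper is designed around the philosophy that $\cQ$ (equivalently $\mu(\cB_f)$) is the higher-codimension replacement for Saito's microlocal $V$-filtration, so the strategy is the natural one: the $V$-filtration on $\cB_f$ along $y_1,\dots,y_r$ is governed by $b_f(s)$ for the operator $s = -\sum_i \partial_{y_i} y_i$, and the passage to $\cQ$ via the defining sequence $L \xrightarrow{\Psi} \mathrm{Sp}(B_f^H) \to Q \to 0$ should cancel the trivial factor $(s+r)$, leaving $\widetilde{b}_f(s)$ as the polynomial controlling $\cQ$. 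One direction of the desired equality is already available from \cite[Thm.~1.3]{CDMO}; the task is then to match jumping numbers on $\cQ$ against the two quantities in order to obtain the reverse inequality.

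First I would show that the minimal polynomial of $s$ on finite-length sub/quotients of $\mathrm{Sp}(B_f^H)$ is a shift of $b_f$, while on $\cQ$ it is $\widetilde{b}_f$. The mechanism is that the image $L \cong i_* \Q_{Z \times \A^r}^H[\dim X]$ is a constant Hodge module supported on the zero section, and is therefore concentrated in the single $s$-weight corresponding to the root $-r$ of $b_f$ (the root associated with the trivial factor $(s+r)$); quotienting by $\Psi(L)$ excises precisely this factor. Then, using Proposition~\ref{justification} which identifies $\mu(\cB_f)$ with the vanishing cycles of $g = \sum y_i f_i$, I would transport the definition of $\widetilde{\alpha}(Z)$ from \cite{CDMO}, phrased in terms of the $V$-filtration on $\cB_g$, into a statement about $\cQ$: the canonical class witnessing $\widetilde{\alpha}(Z)$ for $g$ corresponds under this identification to a distinguished element of $\cQ$ whose $s$-eigenvalue is $-\widetilde{\alpha}(Z)$. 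Combined with the first step, this produces $-\widetilde{\alpha}(Z)$ as a root of $\widetilde{b}_f$, giving $\widetilde{\gamma}(Z) \leq \widetilde{\alpha}(Z)$ and hence equality.

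The main obstacle will be the first step: verifying precisely that passing to $\cQ$ eliminates the factor $(s+r)$ and nothing more. For $r = 1$ this is essentially Saito's microlocalization computation, which by Lemma~\ref{comparer1} is compatible with our $\cQ$; for $r > 1$ both the presence of the extra $y$-coordinates and the Fourier--Laplace transform complicate the bookkeeping, so one has to track carefully how $\mathrm{Sp}$ and $\mathrm{FL}$ interact with the $b$-function relation of \cite{BMS} and then run a weight/Hodge argument showing that $\Psi(L)$ is exactly the $s = -r$ eigencomponent of $\mathrm{Sp}(B_f^H)$ on the relevant graded pieces. A secondary delicate point is ruling out the possibility that some genuine root of $\widetilde{b}_f$ coincidentally falls into $\mathrm{image}(\Psi)$ and gets killed, which would strictly enlarge $\widetilde{\gamma}(Z)$ relative to the invariant seen by $\cQ$; handling this should reduce to a strictness/compatibility statement for the Hodge filtration under $\Psi$, which is available because $\Psi$ is a morphism of mixed Hodge modules.
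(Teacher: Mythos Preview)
There is a genuine gap: your argument establishes the inequality that is already known, not the one that needs to be proved. By \cite[Thm.~1.6]{CDMO} (your citation of Thm.~1.3 points to the $k$-du Bois criterion, not to the relevant comparison) one already has $\widetilde{\gamma}(Z)\leq\widetilde{\alpha}(Z)$, together with $\min\{r+1,\widetilde{\gamma}(Z)\}=\min\{r+1,\widetilde{\alpha}(Z)\}$. Your sketch aims to exhibit $-\widetilde{\alpha}(Z)$ as a root of $\widetilde{b}_f$, which gives $\widetilde{\gamma}(Z)\leq\widetilde{\alpha}(Z)$ once more; it does not rule out a further root $-\gamma$ of $\widetilde{b}_f$ with $\gamma<\widetilde{\alpha}(Z)$. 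That is exactly the residual case, and by the known partial results the only problematic configuration is $\widetilde{\gamma}(Z)=r+j\in\Z_{\geq r+1}$ with $\widetilde{\alpha}(Z)>r+j$.

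Several of the supporting assertions are also off. The module $L\cong i_*\Q^H_{Z\times\A^r}[\dim X]$ is supported on $Z\times\A^r_z$, not on the zero section; it is ${\rm FL}(L)$ that is supported on $X\times\{0\}\subseteq E^\vee$. And $\cL=\cK[z_1,\dots,z_r]$ is not concentrated in a single $s$-weight: it has a nonzero piece $\cL^{r+j}$ in every monodromic degree $r+j$ for $j\geq 0$, on which $s$ has generalized eigenvalue $-(r+j)$. So passing from ${\rm Sp}(\cB_f)$ to $\cQ$ does not simply excise the eigenvalue $-r$. What is true, and what the paper actually uses, is that the induced $G$-filtration on $\cL$ is trivial in the sense $G^0\cL=\cL$ (Lemma~\ref{GFiltonK}), hence ${\rm gr}_G^{-j}\cL=0$ for $j>0$. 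The paper then argues by contradiction: assuming $\widetilde{\alpha}(Z)>r+j=\widetilde{\gamma}(Z)$, one checks directly from the definitions that all classes $[\partial_t^\beta\delta_f]$ with $|\beta|\leq j$ lie in $\cK$, forcing $G^{-\ell}\cL=G^{-\ell}{\rm Sp}(\cB_f)^{\Z}$ for $0\leq\ell\leq j$ and hence ${\rm gr}_G^0{\rm Gr}_V^{r+j}\cB_f=0$, so $(s+r+j)\nmid b_f$. The reduction to integer $\widetilde{\gamma}(Z)$ comes from a separate comparison of $b_g$ with $b_{g|_U}$, not from Proposition~\ref{justification}; the microlocalization identification plays no role in this proof.
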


Finally, though this is not related to the microlocalization, we record the following fact concerning the constructibility of the roots of local Bernstein-Sato polynomials for arbitrary subvarieties $Z$. This is analogous to \cite[Rmk. 2.11]{SaitoMicrolocal}, and in the LCI case gives information about the minimal exponent.
\begin{thm} \label{thm-Construct} Let $Z \subseteq X$ be an arbitrary subvariety. Assume that $Z = \bigsqcup S_i$, $N_Z X= \bigsqcup S_i'$ are Whitney stratifications which stratify the projection $N_Z X \to Z$ and such that $\cH^j{\rm DR}({\rm Sp}(\cB_f))\vert_{S_i'}$ is locally constant for all $j\in \Z$. Let $m_\gamma(x) = \max \{ k \mid (s+\gamma)^k \mid b_{f,x}(s)\}$. Then $x \mapsto m_{\gamma}(x)$ is constructible with respect to $Z = \bigsqcup S_i$.

In particular, if $Z$ is LCI, then the function $x\mapsto \widetilde{\alpha}_x(Z)$ is constructible with respect to $\bigsqcup S_i$.
\end{thm}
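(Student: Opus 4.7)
The plan is to follow the strategy of Saito's \cite{SaitoMicrolocal}*{Rmk.~2.11}, adapted to the higher-codimension setting via the specialization functor ${\rm Sp}$. The key is to identify $m_\gamma(x)$ as a Jordan invariant of ${\rm Sp}(\cB_f)$ at $x$ on the $\cD$-module side, and then to transfer it through the de Rham functor to an invariant of a constructible complex that is locally constant by hypothesis.

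First, I would express $m_\gamma(x)$ as a Jordan invariant on the $\cD$-module side. By the theory of Bernstein-Sato polynomials and $V$-filtrations for several functions (see e.g.\ \cite{BMS}*{Sect.~2}), $b_{f,x}(s)$ is the minimal polynomial of a natural Euler-type operator $s$ (built from the $\partial_{y_i} y_i$) acting on the germ at $x$ of $\bigoplus_\alpha {\rm gr}_V^\alpha \cB_f = {\rm Sp}(\cB_f)$, restricted to a finite relevant range of $\alpha$. Consequently, $m_\gamma(x)$ equals the size of the largest Jordan block for the generalized eigenvalue $-\gamma$ of this operator on the germ of ${\rm Sp}(\cB_f)$ at $x$.

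Next, I would transfer this description to the topological side via the Riemann-Hilbert correspondence. The Euler-type operator is natural, hence induces an endomorphism (still denoted $s$) on the constructible complex ${\rm DR}({\rm Sp}(\cB_f))$ on $N_Z X$; since ${\rm DR}$ is fully faithful on holonomic complexes, Jordan-block data is preserved. Monodromicity of ${\rm Sp}(\cB_f)$ for the $\C^*$-scaling on $N_Z X$ implies that the relevant Jordan structure on the $\cD$-module germ at $x$ is already encoded in the stalk $\cH^\bullet {\rm DR}({\rm Sp}(\cB_f))_{(x,0)}$ equipped with its induced endomorphism. Since the stratifications are compatible with the projection $N_Z X \to Z$, as $x$ varies in a stratum $S_i \subseteq Z$ the point $(x,0)$ varies within a single stratum $S_j' \subseteq N_Z X$; the local constancy hypothesis then yields that the stalk-with-endomorphism is constant along $S_i$, so $m_\gamma$ is constant on $S_i$ and hence constructible with respect to $\bigsqcup S_i$.

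Finally, in the LCI case the statement for $\widetilde{\alpha}_x(Z)$ follows from Theorem~\ref{thm-Bernstein}, which identifies $\widetilde{\alpha}_x(Z)$ with the smallest $\gamma \neq r$ satisfying $m_\gamma(x) > 0$; since $b_{f,x}$ divides the global $b_f$, only finitely many values of $\gamma$ can arise, and taking this minimum preserves constructibility. The main technical obstacle lies in the transfer step above: for codimension $r > 1$, one must carefully verify, using monodromicity of ${\rm Sp}(\cB_f)$ and the Fourier-Sato perspective on the specialization, that the Jordan-block sizes of the Euler action on the $\cD$-module germ at $x$ are genuinely recovered from $\cH^\bullet {\rm DR}({\rm Sp}(\cB_f))_{(x,0)}$ together with its induced endomorphism, extending Saito's codimension-one remark to this higher-codimensional setting.
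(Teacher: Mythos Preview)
Your proposal has the right overall shape, but there is a genuine gap in the first step. You assert that $m_\gamma(x)$ equals the size of the largest Jordan block of the Euler operator $s$ acting on the germ of ${\rm Sp}(\cB_f) = \bigoplus_\alpha {\rm Gr}_V^\alpha \cB_f$ at $x$, ``restricted to a finite relevant range of $\alpha$.'' This is not correct, and no restriction to a range of monodromic indices fixes it. For instance, if $Z$ is smooth then $b_{f,x}(s) = s+r$, so $m_{r+j}(x) = 0$ for every $j \geq 1$; yet ${\rm Gr}_V^{r+j}\cB_f \neq 0$ (it is the degree-$j$ piece of $\cK[z_1,\dots,z_r]$) and carries a nonzero eigenspace for the eigenvalue $-(r+j)$, so the Jordan block size there is $1$. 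The multiplicity $m_\gamma(x)$ is governed not by ${\rm Gr}_V^\gamma\cB_f$ itself but by the subquotient ${\rm gr}_G^0{\rm Gr}_V^\gamma\cB_f$ coming from the $G$-filtration $G^\bullet\cB_f = V^\bullet\cD\cdot\delta_f$: by equivalence~\eqref{GFiltRoots}, $(s+\gamma)^k \mid b_{f,x}(s)$ if and only if $(s+\gamma)^{k-1}$ acts nontrivially on ${\rm gr}_G^0{\rm Gr}_V^\gamma\cB_f$ near $x$. Your ``relevant range of $\alpha$'' is a restriction in the monodromic direction, which is orthogonal to passing to ${\rm gr}_G^0$.

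The paper's argument supplies exactly this missing ingredient. It forms the $\cD$-submodule $\cN_\gamma = \cD\cdot {\rm gr}_G^0{\rm Gr}_V^\gamma\cB_f$ of a subquotient of ${\rm Sp}(\cB_f)$ and the $\cD$-linear nilpotent endomorphism $N$ acting as $s+\chi$ on the $\chi$th monodromic piece; then $m_\gamma(x) \geq k+1$ is equivalent to $N^k\cN_\gamma \neq 0$ near $x$, again by~\eqref{GFiltRoots}. Since each $N^k\cN_\gamma$ is a subquotient of ${\rm Sp}(\cB_f)$, its de Rham complex is locally constant on every $S_j'$ by the hypothesis, and non-vanishing of a holonomic module is read off from the support of its de Rham complex; constructibility of $m_\gamma$ follows at once. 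In particular, once the correct subquotient is identified, your transfer step via stalks at $(x,0)$ and Fourier--Sato considerations is unnecessary: one only needs that local constancy of ${\rm DR}$ along the strata passes to subquotients and detects support.
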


\noindent {\bf Outline of Paper.} In Section \ref{sect-background}, we will mention briefly the important aspects of the theory of $\cD$-modules and mixed Hodge modules which are needed for the paper. We also recall the definition of the spectrum in the complete intersection case, due to \cite{DMS}. We end the background section with the definition of the Bernstein-Sato polynomial and minimal exponent for a local complete intersection.

In Section \ref{sect-microlocal}, we define the map $\Psi$ and study its properties. We relate this construction to the associated graded pieces of Saito's microlocalization construction when $r = 1$. Finally, we show how the composition of functors $\mu = {\rm FL} \circ {\rm Sp}$, classically called the microlocalization functor, relates to the definition of $\widetilde{\alpha}(Z)$ from \cite{CDMO}.

In Section \ref{sect-spectrum}, we prove Theorem \ref{thm-Spectrum} and Theorem \ref{thm-equising}. The main idea is to use the equality ${\rm Sp}(Z,x) = \widehat{\rm Sp}(Q,x)$, so that questions about the reduced spectrum can be studied via the spectrum of $Q$. This replacement is important for the application because the support of $Q$ lies over ${\rm Sing}(Z)$. Then, one observes that the minimal exponent allows one to obtain certain vanishings for the Hodge filtration of $Q$, which allow for vanishings of spectral numbers. 

In the final Section \ref{sect-Bernstein}, we prove Theorem \ref{thm-Bernstein} and Theorem \ref{thm-Construct}. Already in \cite[Sect. 6]{CDMO} the two quantities $\widetilde{\gamma}(Z)$ and $\widetilde{\alpha}(Z)$ had been related. The issue is to rule out the case that $\widetilde{\alpha}(Z)$ is strictly larger than $\widetilde{\gamma}(Z)$, when the latter is of the form $r + j$. This is done by relating the $G$-filtration on ${\rm Sp}(\cB_f)$ to that of $\cK[z_1,\dots, z_r]$, using the assumption that $\widetilde{\alpha}(Z) > r+j$. As before, it is not clear how to rule out these possible integer roots without using the microlocalization module $\cQ$ (equivalently, the module $\cK[z_1,\dots, z_r]$).

\noindent {\bf Acknowledgments}. The author is grateful to Qianyu Chen, James Hotchkiss, Lauren\c{t}iu Maxim, Mircea \Mustata, Sung Gi Park, Mihnea Popa and Claude Sabbah for useful conversations and comments about ideas appearing in this paper. The author would like to thank Sebasti\'{a}n Olano for many conversations around this topic which lead to the discovery of a gap in the proof of Theorem \ref{thm-Spectrum}.

\section{Background}\label{sect-background} \subsection{Notation} We assume throughout that $X$ is a smooth, irreducible complex algebraic variety of dimension $\dim X = n$. 

We will study various trivial vector bundles over $X$, so we introduce notation to keep track of a fixed coordinate system on the fibers. Starting from a trivial vector bundle $X\times \A^r_t$ with coordinates $t_1,\dots, t_r$ on the $\A^r$ factor, we will also study the normal bundle $X \times \A^r_z = N_{X \times \{0\}} X\times \A^r$, whose coordinates we take to be $z_1,\dots, z_r$ on the $\A^r$ factor. We will also consider the conormal bundle $X\times \A^r_y$, with coordinates $y_1,\dots, y_r$ which we view as ``dual" to $z_1,\dots, z_r$. 

 \subsection{$\cD$-modules} We begin this section with a brief review of $\cD$-modules, following \cite{HTT}.

On a smooth variety $S$, a $\cD_S$-module $\cM$ is a left module over the ring of differential operators $\cD_S$, as defined in \cite[Sect. 1.2]{HTT}. All $\cD$-modules considered here are holonomic, see \cite[Ch. 3]{HTT}. As in \cite[Sect. 1.5]{HTT}, given any morphism $f\colon S_1 \to S_2$ of smooth varieties, there are functors $f_+\colon{\rm D}_{\rm hol}^b(\cD_{S_1}) \to {\rm D}_{\rm hol}^b(\cD_{S_2})$ and $f^*\colon {\rm D}_{\rm hol}^b(\cD_{S_2}) \to {\rm D}_{\rm hol}^b(\cD_{S_1})$, where ${\rm D}^b_{\rm hol}(\cD)$ is the derived category of complexes with holonomic cohomology modules. The fact that these functors preserve holonomicity is \cite[Thm. 3.2.3]{HTT}. Moreover, any holonomic $\cD_S$-module $\cM$ has a constructible complex ${\rm DR}_S(\cM)$ associated to it, see \cite[Sect. 4.2]{HTT}. For $\cM$ holonomic, this complex is perverse, in the sense of \cite{BBDG}, by Kashiwara's constructibility theorem \cite[Thm. 4.6.6]{HTT}.

When $i\colon X \subseteq S$ is a closed embedding of a smooth subvariety in a smooth variety $S$, we know by \cite[Thm. 3.6]{CD} and \cite[Thm. 1]{CDS} that the pullback functor $i^*$ is computed using the \emph{Kashiwara-Malgrange $V$-filtration} of a (regular)-holonomic module $\cM$, which we briefly describe now.

We can consider a multiplicative filtration $V^\bullet \cD_S$ on the ring of differential operators on $S$ given by
\[ V^\bullet \cD_S = \{ P \in \cD_S \mid P \cdot \cI^k \subseteq \cI^{k+\bullet}\},\]
where $\cI$ is the coherent sheaf of ideals defining $X$ in $S$.

Kashiwara and Malgrange showed that, for any regular holonomic $\cD_S$-module $\cM$, there exists a $V$-filtration along $X$, which is indexed by $\Z$. Here we give the $\Q$-indexed definition, which is a refinement of the Kashiwara-Malgrange $V$-filtration, and is due to Saito. A $\Q$-indexed $V$-filtration only exists in the quasi-unipotent monodromy case, essentially by definition. This property is satisfied for all regular holonomic $\cD$-modules underlying mixed Hodge modules. 

A $\Q$-indexed $V$-filtration on $\cM$ is a filtration $V^\bullet \cM$ which is discretely indexed\footnote{This means that there is an increasing sequence $\alpha_j \in \Q$ with $\lim_{j\to -\infty} \alpha_j = -\infty$ and $\lim_{j\to \infty} \alpha_j = \infty$, such that $V^\alpha \cM$ for $\alpha \in (\alpha_j,\alpha_{j+1})$ only depends on $j$.}, left-continuously\footnote{This means $V^\alpha \cM = \bigcap_{\beta < \alpha}V^\beta$.}, so that if $\theta \in V^0\cD_S$ is any vector field which acts by the identity on $\cI/\cI^2$, the following properties hold:
\begin{enumerate} \item $V^i \cD_S \cdot V^\lambda \cM \subseteq V^{\lambda+i}\cM$ for all $i\in \Z, \lambda \in \Q$,
\item $V^\lambda \cM$ is coherent over $V^0 \cD_S$ for all $\lambda \in \Q$,
\item $V^1 \cD_S \cdot V^{\lambda}\cM = V^{\lambda+1}\cM$ for all $\lambda \gg 0$,
\item The operator $\theta - \lambda +r \in V^0\cD_S$ acts nilpotently on ${\rm Gr}_V^\lambda \cM \coloneqq V^\lambda \cM / V^{>\lambda} \cM$, where $V^{>\lambda} \cM = \bigcup_{\beta > \lambda}V^\beta \cM$.
\end{enumerate}

If $Z\subseteq S$ is a possibly singular subvariety, locally defined by $f_1,\dots, f_r \in \cO_S$, then for a $\cD_S$-module $\cM$, one defines the $V$-filtration along $Z$ to be the $V$-filtration of $\Gamma_+ \cM$ along the zero section $S \times \{0\} \subseteq S \times \A^r$. Here $\Gamma\colon S \to S \times \A^r$ is the graph embedding along $f_1,\dots, f_r$.

It is an exercise to show that if a $V$-filtration exists, then it is unique. From this, it is easy to see that if $\varphi\colon \cM \to \cN$ is a $\cD_S$-linear morphism of quasi-unipotent regular holonomic $\cD_S$-modules, then it is \emph{filtered strict} with respect to the $V$-filtration, meaning that for all $\lambda \in \Q$, we have equality
\[ \varphi(V^\lambda \cM) = V^\lambda \cN \cap \text{im}(\varphi).\]

The easiest example is the following:
\begin{eg} \label{KashiwaraEq} Let $\cM$ be a $\cD_S$-module which is supported on a smooth subvariety $X \subseteq S$. Assume that $X$ is defined by part of a system of local coordinates $t_1,\dots, t_c \in \cO_S$. Then Kashiwara's equivalence (\cite[Sect. 1.6]{HTT}) states that there is an isomorphism of $\cD_S$-modules
\[ \cM \cong \bigoplus_{\alpha \in \N^c} \cN \de_t^\alpha,\]
where $\cN = \bigcap_{i=1}^c \ker(t_i) \subseteq \cM$. It follows from the proof of Kashiwara's equivalence, or an easy exercise, that $\cN$ is also equal to $\ker(\sum_{i=1}^r \de_{t_i} t_i)$.

Then if $V^\bullet \cM$ is the $V$-filtration along $X$, it is easy to see that
\[ V^{-\lambda}\cM = \bigoplus_{|\alpha| \leq \lambda} \cN \de_t^\alpha,\]
and $V^{>0}\cM = 0$.
\end{eg}

Let $u \in \cO_S$ define a smooth hypersurface $H$. We define the \emph{nearby cycles} for $\lambda \in (0,1]$ of a regular holonomic $\cD_S$-module $\cM$ to be
\[ \psi_{u,\lambda}(\cM) : = {\rm Gr}_V^\lambda \cM,\]
and the \emph{vanishing cycles}
\[ \phi_{u,\lambda}(\cM)  = \psi_{u,\lambda}(\cM) \text{ for } \lambda \in (0,1), \quad \phi_{u,1}(\cM) = {\rm Gr}_V^0 \cM,\]
which are regular holonomic $\cD_H$-modules. These are so defined because under the Riemann-Hilbert correspondence, they correspond to the nearby and vanishing cycles of the perverse sheaf ${\rm DR}_S(\cM)$, see \cite{MMDuality}.

If $\varphi\colon \cM \to \cN$ is a morphism of regular holonomic $\cD_S$-modules such that $\varphi\vert_{S^*}$ is an isomorphism, where $S^* = \{u\neq 0\}$, then the result of Example \ref{KashiwaraEq} applied to the kernel and cokernel of the morphism $\varphi$ shows that for all $\lambda > 0$, the map $\varphi$ induces an isomorphism $\varphi\colon V^\lambda \cM \cong V^\lambda \cN$. In particular, if $\cM$ and $\cN$ are two regular holonomic $\cD_S$-modules with isomorphic restrictions to $S^*$, then
\begin{equation} \label{sameNearbyCycles} \psi_{u,\lambda}(\cM) \cong \psi_{u,\lambda}(\cN) \text{ for all } \lambda \in (0,1].\end{equation}

\subsubsection{Mixed Hodge Modules} We review here the pertinent parts of Saito's theory of mixed Hodge modules. For more details, one should see Saito's original papers \cites{SaitoMHP,SaitoMHM} or the survey article by Schnell \cite{Schnell}.

On a smooth variety $S$, the data of a mixed Hodge module consists of a tuple $(\cM,F,W,\cK,\alpha)$ where $\cM$ is a regular holonomic $\cD_S$-module, $F_\bullet \cM$ is a filtration of $\cM$ by $\cO$-submodules, called the Hodge filtration, which is a good filtration with respect to the order filtration on $\cD_S$, $W_\bullet \cM$ is a finite filtration by $\cD$-submodules, called the \emph{weight filtration}, and $\cK$ is a $\Q$-perverse sheaf with an isomorphism $\alpha: \cK\otimes_{\Q} \C \cong {\rm DR}_{S}^{\rm an}(\cM)$. In this paper, we will only be concerned with the filtered $\cD$-module $(\cM,F)$. Such a tuple is subject to various conditions concerning the Hodge filtration and the $V$-filtration along any locally defined (possibly singular) hypersurface in $S$. 

In this paper, following the conventions of \cite{DMS}, we will index the Hodge filtration following the conventions of \emph{right} $\cD$-modules. 
 
\begin{eg} \label{trivialHodgeModuleSmooth} On a smooth variety $S$, the structure sheaf underlies a Hodge module. Indeed, the tuple $(\cO_S ,F,W,\Q_{S^{\rm an}}[\dim S])$ is a Hodge module, where $F_{-\dim S}\cO_S = \cO_S$ and ${\rm gr}^W_{\dim S} \cO_S = \cO_S$. We denote this Hodge module by $\Q^H_S[\dim S]$.
\end{eg}

\begin{eg} \label{MHMonPoint} On a point $\{x\}$, the sheaf $\cD$ is isomorphic to the ring $\C$, and $\cD$-modules are simply $\C$-vector spaces.

By Saito \cite{SaitoMHM}*{(4.2.12)} The category of mixed Hodge modules on $\{x\}$ is equivalent to the category of graded polarizable $\Q$-mixed Hodge structures. There exists the ``trivial Hodge structure" $\Q^H$ whose underlying $\C$-vector space is $\C$, with Hodge filtration ${\rm Gr}^F_0 \C = \C$ and weight filtration ${\rm Gr}^W_0 \C = \C$.
\end{eg}

The category of mixed Hodge modules on a smooth variety is abelian. Mixed Hodge modules (and bifiltered direct summands thereof) satisfy many functoriality properties regarding maps between smooth algebraic varieties, which enhance those functoriality properties mentioned for holonomic $\cD$-modules. In this paper, we will only make use of the restriction functors along closed embeddings and the nearby/vanishing cycles functors. If $i\colon X \to S$ is a closed embedding and $M$ is a mixed Hodge module on $S$, then one can define
\[ i^* M \in {\rm D}^b({\rm MHM}(X))\]
whose underlying complex of $\cD_X$-modules agrees with $i^*(\cM)$, where $\cM$ is the $\cD_S$-module underlying $M$. By \cite[Thm. 1.2]{CD} and \cite[Thm. 1]{CDS}, this complex (along with its Hodge filtration) can be computed using the $V$-filtration of $\cM$ along $X$, but we will not use that in this paper. However, we will need to be careful about the indexing of the Hodge filtration for the cohomology modules of $i^* M$, which we state in the following remarks.

\begin{rmk} \label{RestrictHodge} Let $i \colon X \to S$ be a closed embedding of a smooth subvariety into a smooth variety. By a local computation (choosing coordinates such that a subset of them define $X$ in $S$), we see, using \cite[Thm. 1.2]{CD} and \cite[Thm. 1]{CDS} or iteratively \cite[Cor. 2.24]{SaitoMHM}, that if $(\cM,F)$ is a filtered $\cD_S$-module underlying a (direct summand of a) mixed Hodge module on $S$ which satisfies $F_\sigma \cM = 0$, then
\[ F_{\sigma - k} \cH^k i^* \cM = 0,\]
where $\cH^k i^* \cM \neq 0$ only for $k \in \{-r,\dots, 0\}$.
 
Moreover, if we factor $i = j \circ \iota\colon  X \to W \to S$, we have a natural isomorphism of functors $i^* \cong \iota^* \circ j^*$, and hence, we have a spectral sequence in ${\rm MHM}(X)$
\[ E_2^{p,q} = \cH^p \iota^* \cH^q j^* M \implies \cH^{p+q} i^* M.\]
\end{rmk}

\begin{rmk} \label{RestrictNonChar} If $(\cM,F)$ is a (direct summand) of a filtered $\cD_S$-module underlying a mixed Hodge module and $i\colon X \to S$ is a closed embedding of a smooth subvariety $X$ of pure codimension $c$, which is non-characteristic (in the sense of \cite[Lem. 2.25]{SaitoMHM}) with respect to $\cM$, then $i^* \cM = \cM \otimes_{\cO_S} \cO_X[c]$. Moreover, by \cite[Lem. 2.25]{SaitoMHM}, the Hodge filtration satisfies
\[ F_p i^* \cM = (F_{p-c} \cM) \otimes_{\cO_S} \cO_X [c].\]

Thus, if we can factor a closed embedding $i = j \circ \iota\colon X \to W \to S$ where $\iota$ is non-characteristic with respect to each cohomology module $\cH^p j^* M$ and $X$ has pure codimension $c$ in $W$, then using the spectral sequence in Remark \ref{RestrictHodge}, we get
\[ F_p \cH^k i^*\cM = (F_{p-c} \cH^{k-c} j^* \cM) \otimes_{\cO_W} \cO_X.\]
\end{rmk}

For any complex algebraic variety $Z$ with constant map $k\colon Z \to \{*\}$, we can define the \emph{trivial mixed Hodge module} on $Z$ by
\begin{equation} \label{trivialHodgeModule} \Q^H_Z \coloneqq k^* \Q^H \in {\rm D}^b({\rm MHM}(Z)).\end{equation}

The notation is consistent in the smooth case: if $S$ is smooth, then $\Q_S^H[\dim S]$ is the Hodge module described in Example \ref{trivialHodgeModuleSmooth}.

If $M$ is a mixed Hodge module on $S$ with underlying filtered $\cD_S$-module $(\cM,F)$ and $H \subseteq S$ is a smooth hypersurface defined by $u \in \cO_S(S)$, then the nearby cycles $\psi_u(M)$ is a mixed Hodge module on $H$. The underlying filtered $\cD_H$-module is
\[ \psi_u(\cM) = \bigoplus_{\lambda \in (0,1]} \psi_{u,\lambda}(\cM), \quad F_p \psi_u(\cM) = \bigoplus_{\lambda \in (0,1]} F_{p-1} {\rm Gr}_V^{\lambda}\cM.\]

Similarly, the unipotent vanishing cycles is a mixed Hodge module on $H$, with filtered $\cD_H$-module
\[ \phi_{u,1}(\cM) = {\rm Gr}_V^0 \cM, \quad F_p \phi_{u,1}(\cM) = F_p {\rm Gr}_V^0 \cM.\]

These shifts follow the convention for right $\cD$-modules.

\subsection{Specialization and Monodromic Mixed Hodge Modules} We describe a special class of mixed Hodge modules on a trivial vector bundle in this subsection. For more details, see \cite[Section 2.7]{CD}. Let $E = X\times \A^r_z$. A coherent $\cD_E$-module $\cM$ is \emph{monodromic} if it admits a decomposition
\begin{equation} \label{monoDecomp} \cM = \bigoplus_{\chi \in \Q} \cM^\chi,\end{equation}
where, if $\theta = \sum_{i=1}^r z_i \de_{z_i}$ is the Euler operator on $E$, we have $\cM^\chi = \bigcup_{\ell > 0} \ker\left( ( \theta - \chi + r)^\ell \right)$. It is easy to see that the $V$-filtration along $z_1,\dots, z_r$ is given by
\[ V^\lambda \cM = \bigoplus_{\chi \geq \lambda} \cM^\chi, \quad {\rm Gr}_V^\lambda \cM = \cM^\lambda.\] We say a mixed Hodge module $M$ on $E$ is \emph{monodromic} if its underlying $\cD_E$-module is monodromic. Note that our definition of monodromic assumes that the eigenvalues of $\theta$ are rational numbers, which is not standard in the literature, but if such a module underlies a mixed Hodge module, this condition is automatic.

For any $\lambda \in \Q$, we define $\cM^{\lambda+ \Z}$ as the $\cD$-module direct summand of $\cM$ given by $\bigoplus_{\ell \in \Z} \cM^{\lambda+\ell}$. If $(\cM,F)$ underlies a mixed Hodge module, we endow $\cM^{\lambda +\Z}$ with the induced filtration $F_\bullet \cM^{\lambda+\Z}$, so that, by Equation \ref{HodgeDecomp} below, it is a direct summand of a mixed Hodge module.

By \cite[Thm. 3.2]{CD}, for a monodromic $\cD_E$-module $\cM$, we know that the morphism
\begin{equation} \label{KoszulSurj} \bigoplus_{i=1}^r \cM^{\chi -1} \xrightarrow[]{z_i} \cM^\chi \end{equation}
is surjective for all $\chi > r$.

Moreover, if $(\cM,F)$ is a filtered monodromic $\cD$-module underlying a mixed Hodge module on $E$, then by \cite[Thm. 5.8]{CD}, we have
\begin{equation} \label{HodgeDecomp} F_p \cM = \bigoplus_{\chi \in \Q} F_p \cM^\chi, \quad \text{where} \quad F_p\cM^\chi = F_p\cM \cap \cM^\chi.\end{equation}
By \cite[Thm. 1.1]{CD} and \cite[Thm. 2]{CDS}, for all $\chi > r$ and $p\in \Z$, the complex
\begin{equation} \label{FilteredSurj}\bigoplus_{|I| = 2} F_p \cM^{\chi-2} \xrightarrow[]{z_i} \bigoplus_{i=1}^r F_p \cM^{\chi -1} \xrightarrow[]{z_i} F_p\cM^\chi  \to 0 \end{equation}
is exact, where the first term is the direct sum over all subsets $I \subseteq \{1,\dots, r\}$ of size 2. The first differential sends a tuple $(\eta_{\{i,j\}}) \in \bigoplus_{|I| =2} F_k \cM^{\chi-2}$ to the $r$-tuple $(\sum_{i > \ell} z_i \eta_{\{i,\ell\}} - \sum_{ j < \ell} z_j \eta_{\{j,\ell\}})_{1\leq \ell \leq r}$.

We have the following elementary lemma, which is an important ingredient in the proof of Theorem \ref{thm-Spectrum}.

\begin{lem} \label{lem-minHodgeGenSupp} Let $(\cM,F)$ be a filtered (direct summand of a) $\cD_E$-module underlying a monodromic mixed Hodge module, where $\cM = \bigoplus_{\ell\in \Z} \cM^{\lambda + \ell}$ for some $\lambda \in (0,1]$. Let $k = \min \{ p \in \Z \mid F_p \cM \neq 0\}$. If $F_k \cM^{r+\lambda-2} = 0$ and $F_k \cM^{r+\lambda-1} \neq 0$, then there does not exist $h \in \C[z_1,\dots, z_r]$ such that $h F_k \cM = 0$. In other words, the support of $F_k \cM$ as a $\C[z_1,\dots, z_r]$-module is equal to $\A^r_z$.
\end{lem}
\begin{proof} As $F_k \cM$ is a graded module, it suffices to assume $h$ is homogeneous. We will show that $h(\C[z]\cdot F_k \cM^{r+\lambda-1}) \neq 0$ for any homogeneous $h$ by induction on $\deg(h) = d$. In fact, we will prove the stronger claim by induction on $d$:
\begin{equation} \label{eq-propd} \text{ if } \sum_{|\alpha| = d} z^\alpha m_\alpha = 0 \text{ for } m_\alpha \in F_k \cM^{r+\lambda-1}, \text{ then } m_\alpha =0 \text{ for all } |\alpha| = d.\end{equation}

For $d = 1$, we look at the exact sequence \ref{FilteredSurj} when $\chi = r+\lambda+1$, giving
\[ \bigoplus_{|I| = 2} F_p \cM^{r+\lambda -2} \xrightarrow[]{z_i} \bigoplus_{i=1}^r F_p \cM^{r+\lambda-1} \xrightarrow[]{z_i} F_p\cM^{r+\lambda} \to 0.\]

By assumption, the first term is 0, so we have injectivity of $\bigoplus_{i=1}^r F_p \cM^{r+\lambda-1} \xrightarrow[]{z_i} F_p \cM^{r+\lambda}$, proving the claim when $d=1$.

Now, assume $\sum_{|\alpha| = d} z^\alpha m_\alpha = 0$. Define 
\[\eta_1 = \sum_{\alpha_1 > 0} z^{\alpha-e_1} m_\alpha, \quad \eta_2 = \sum_{\alpha_1 =0, \alpha_2 > 0} z^{\alpha-e_2} m_\alpha, \,\dots \, , \quad \eta_r = z_r^{d-1} m_{d e_r}.\] By the inductive hypothesis, it suffices to prove that $\eta_1 = \dots = \eta_r = 0$.

By assumption $(\eta_1,\dots, \eta_r)$ is an $r$-tuple which lies in the kernel of the map $\bigoplus_{i=1}^r F_k \cM^{r+\lambda+d-2} \xrightarrow[]{z_i} F_k \cM^{r+\lambda-1+d}$. By exactness of the complex \ref{FilteredSurj}, there exists $(\mu_{i,j}) \in \bigoplus_{|I| =2} F_k \cM^{r+\lambda+d-3}$ such that applying the Koszul differential gives $(\eta_1,\dots, \eta_r)$.

It is easy to see by iterated use of the surjectivity coming from the exact sequence \ref{FilteredSurj} that $F_k \cM^{r+\lambda+p-1} = \C[z_1,\dots,z_r]_p \cdot F_k \cM^{r+\lambda-1}$, where $\C[z_1,\dots, z_r]_p$ consists of the homogeneous polynomials of degree $p$. Write
\[ \mu_{\{i,j\}} = \sum_{|\gamma| = d-2}  z^\gamma m_{\gamma,\{i,j\}}.\]

Thus, the equality
\[ \eta_\ell = \sum_{i > \ell } z_i \mu_{\{i,\ell\}} - \sum_{ j < \ell } z_j \mu_{\{\ell,j\}}\]
is an equality in $\C[z_1,\dots, z_r]_{d-1}\cdot F_k \cM^{r+\lambda-1}$. In particular, by the inductive hypothesis, we see that the coefficient of $z^\beta$ on both sides must be equal, for any $|\beta| = d-1$.

We will prove by descending strong induction on $\ell$ that $\eta_\ell = 0$.

We have
\[ \eta_r = z_r^{d-1} m_{d e_r} = - \sum_{j < r} z_j \mu_{\{j,r\}},\]
and since the right hand side has coefficient 0 for $z_r^{d-1}$, we get $\eta_r = 0$.

Now, assume $\eta_{\ell+1} = \dots = \eta_r = 0$. We consider
\[ \eta_\ell = \sum_{i > \ell } z_i \mu_{\{\ell, i\}} - \sum_{ j < \ell } z_j \mu_{\{\ell,j\}}\]
where by definition, the only monomials appearing on the left hand side are $z^\beta$ with ${\rm supp}(\beta) = \{i\mid \beta_i > 0\} \subseteq \{\ell,\ell+1,\dots, r\}$. Fix some such $\beta$. Then the coefficient of $z^\beta$ on the right hand side is
\[ \sum_{i \in {\rm supp}(\beta)\setminus \{\ell\}} m_{\beta - e_i,\{\ell,i\}},\]
and we want to prove that this sum is zero.

To do this, let $j \in {\rm supp}(\beta) \setminus \{\ell\}$, so by choice of $\beta$ we must have $j > \ell$. Then
\[ 0 = \eta_j = \sum_{k > j } z_k \mu_{\{j,k\}} - \sum_{p < j} z_p \mu_{\{p,j\}}\]
and if we look at the coefficient of $z^{\beta + e_\ell - e_j}$, we get
\[ 0 = \sum_{k > j} m_{\beta + e_\ell - e_j -e_k, \{j,k\}} - \sum_{p < j} m_{\beta+e_\ell -e_j -e_p,\{p,j\}}.\]

By moving the term with $p = \ell$ to the other side ,we get
\[ m_{\beta-e_j,\{\ell,j\}} = \sum_{k > j} m_{\beta + e_\ell - e_j -e_k, \{j,k\}} - \sum_{\ell < p < j} m_{\beta+e_\ell -e_j -e_p,\{p,j\}}.\]

Adding up over all $j \in {\rm supp}(\beta) \setminus \{\ell\}$, we get zero. Indeed, for any pair $j < k$ (both in the support of $\beta$), the contribution from $m_{\beta-e_j,\{\ell,j\}}$ is $m_{\beta + e_\ell - e_j-e_k,\{j,k\}}$ and the contribution from $m_{\beta-e_k,\{\ell,k\}}$ is $-m_{\beta+e_\ell - e_j-e_k,\{j,k\}}$, so they cancel.
\end{proof}

Let $Z \subseteq X$ be defined by $f_1,\dots, f_r \in \cO_X(X)$ and let $M$ be a mixed Hodge module on $X$. Associated to this, we get a monodromic mixed Hodge module on $X\times \A^r_z$, called the \emph{Specialization of $M$}. By pushing forward along the graph embedding $\Gamma\colon X \to X\times \A^r_t$, we get a filtered $\cD$-module $\Gamma_+(\cM,F)$ on $X\times \A^r_t$ which we recall has Hodge filtration indexed following the conventions of right $\cD$-modules. Under the isomorphism $\Gamma_+ \cM = \bigoplus_{\alpha \in \N^r} \cM \de_t^\alpha$, this means
\[ F_p \Gamma_+ \cM = \bigoplus_{\alpha \in \N^r} F_{p-|\alpha|}\cM \de_t^\alpha.\]

We briefly recall the definition of the specialization functor ${\rm Sp}(-)$. For details, see \cite[Sect. 2]{BMS} or \cite[Sect. 2.4]{CD}.

Let $X \times \{0\} \subseteq X \times \A^r_t$ be the zero section. We can define the \emph{deformation to the normal bundle}
\[ \widetilde{X} \coloneqq \underline{\rm Spec}_{X\times \A^r_t}( \bigoplus_{k \in \Z} \cI^k \otimes u^{-k}),\]
where $\cI = (t_1,\dots, t_r)$ is the ideal defining the zero section and $\cI^{-\ell} = \cO_{X\times \A^r_t}$ for $\ell \geq 0$. The function $u\colon \widetilde{X} \to \A^1$ is smooth and over $\mathbf G_m \coloneqq \A^1 \setminus \{0\}$ is isomorphic to the projection $(X\times \A^r_t) \times \mathbf G_m \to \mathbf G_m$. Let $j\colon (X\times \A^r_t) \times \mathbf G_m \to \widetilde{X}$ be the open embedding, with complement $\iota\colon \{u =0\} \to \widetilde{X}$. 

The hypersurface $\{u =0\}$ is naturally isomorphic to the normal bundle of $X\times \{0\}$ inside $X \times \A^r_t$, which is again a trivial vector bundle $X\times \A^r_z$ over $X$, whose fiber coordinates are $z_1 = \frac{t_1}{u},\dots, z_r = \frac{t_r}{u}$.

If $q \colon (X\times \A^r_t) \times \mathbf G_m \to X\times \A^r_t$ is the projection to the first factor, then, by definition
\[ {\rm Sp}(\Gamma_* M) \coloneqq \psi_u j_* q^*(\Gamma_*M),\]
where for ease of notation we view $q^*(\Gamma_*M)$ as a mixed Hodge module placed in cohomological degree 0. 

The underlying $\cD$-module is monodromic, with decomposition
\[ {\rm Sp}(\Gamma_+ \cM) \coloneqq \bigoplus_{\chi \in \Q} {\rm Gr}_V^{\chi}(\Gamma_+ \cM).\]
The Hodge filtration is given by
\[ F_p {\rm Sp}(\Gamma_+ \cM) = \bigoplus_{\chi \in \Q} F_p {\rm Gr}_V^{\chi}(\Gamma_+ \cM).\]

Let $E^\vee = X \times \A^r_y$ be another trivial vector bundle, viewed as the dual of $E$. Given any $\cD_E$-module $\cM$, we can define a $\cD_{E^\vee}$-module ${\rm FL}(\cM)$, the \emph{Fourier-Laplace transform} of $\cM$, which has the same underlying $\cD_X$-module and such that, for any $m \in \cM$, we have
\[ y_i \cdot m = - \de_{z_i} \cdot m, \quad \de_{y_i} \cdot m = z_i \cdot m.\]

One can see that the functor ${\rm FL}(-)$ need not preserve regular holonomicity. However, Brylinski \cite{Brylinski} showed that if $\cM$ is a monodromic, regular holonomic $\cD_E$-module, then ${\rm FL}(\cM)$ is a monodromic, regular holonomic $\cD_{E^\vee}$-module.

The Fourier-Laplace transform of a monodromic mixed Hodge module is studied in \cite[Sect. 7]{CD}. In \emph{loc. cit.}, the Hodge filtration is shown to be related to that of $\cM$ in the following way:
\begin{equation} \label{HodgeFL} F_p {\rm FL}(\cM)^{r-\chi} = F_{p-\lceil \chi \rceil} \cM^\chi. \end{equation}

See also \cite{MonoMHM2}, where the same formula is shown, and is extended to irregular Hodge filtrations.

\subsection{Spectrum of a complete intersection subvariety} We give here the definition of the spectrum of a complete intersection $Z \subseteq X$ at a point $x\in Z$, as given by \cite{DMS}. The complete intersection assumption simplifies the definition of the spectrum.

Let $f_1,\dots, f_r \in \cO_X(X)$ define the complete intersection subvariety $Z \subseteq X$ of pure codimension $r$. Then the normal bundle $N_Z X$ is a trivial bundle over $Z$, and it embeds into $X \times \A^r_z$. For $x\in Z$ fixed, the fiber of $N_Z X \to Z$ over $x$ is then isomorphic to $\A^r$. Let $i_x\colon \{x\} \times \A^r_z \to X \times \A^r_z$ and for $\xi \in \{x\} \times \A^r_z$, let $j_\xi\colon \{\xi\} \to \{x\} \times \A^r_z$ be the inclusion. Set $i_\xi \coloneqq i_x \circ j_\xi\colon \{\xi\} \to X \times \A^r_z$.

For $\xi$ sufficiently general, i.e., so that $j_\xi$ is non-characteristic with respect to $\cH^k i_x^* {\rm Sp}(\cB_f)$ for all $k \in \Z$, we define
\[ \widehat{\rm Sp}(Z,x) \coloneqq \sum m_{\alpha,x} t^\alpha\]
where
\[ m_{\alpha,x} \coloneqq \sum_{k \in \Z} (-1)^{k} \dim_\C {\rm Gr}^F_{\lceil \alpha \rceil - \dim Z -1} \cH^{k-r} i_\xi^* {\rm Sp}(\cB_f)^{\alpha + \Z}.\]

The \emph{reduced spectrum} is defined as
\[ {\rm Sp}(Z,x) \coloneqq \widehat{\rm Sp}(Z,x) + (-t)^{\dim Z +1} = \sum \overline{m}_{\alpha,x} t^\alpha.\]

This definition can easily be extended to an arbitrary monodromic mixed Hodge module $M$ on $X\times \A^r_z$: let
\[ \widehat{\rm Sp}(M,x) \coloneqq \sum m_{\alpha,x}(M) t^\alpha,\]
where
\[ m_{\alpha,x}(M) \coloneqq \sum_{k\in \Z} (-1)^k \dim_\C {\rm Gr}^F_{\lceil\alpha \rceil - \dim Z -1} \cH^{k-r} i_\xi^* \cM^{\alpha + \Z},\]
where $(\cM,F)$ is the underlying filtered $\cD_E$-module.

If $0 \to M_1 \to M_2 \to M_3 \to 0$ is a short exact sequence of monodromic mixed Hodge modules on $E$, it is easy to check that, for all $x\in Z$, we have
\[ \widehat{\rm Sp}(M_2,x) = \widehat{\rm Sp}(M_1,x) + \widehat{\rm Sp}(M_3,x).\]

We state here an easy lemma that allows us to give vanishing of spectral numbers.

\begin{lem} \label{lem-SpectralNumberLowerBound} Let $(\cM,F)$ be a filtered $\cD_E$-module underlying a monodromic mixed Hodge module $M$. Let $p \in \Z$ and $\lambda \in (0,1]$ be such that $F_{p-1-\dim X} \cM = 0$ and $F_{p-\dim X}\cM^{\mu+\Z} = 0$ for all $\mu \in (0,\lambda)$. Then
\[ m_{\alpha,x}(M) = 0 \text{ for all } \alpha < p + \lambda.\]
\end{lem}
\begin{proof} Remark \ref{RestrictHodge} shows that $F_{p-1-\dim X} \cH^j i_x^* \cM = 0$ and $F_{p-\dim X} \cH^j i_x^* \cM^{\mu+\Z} = 0$ for all $\mu \in (0,\lambda)$.

As $j_\xi$ is non-characteristic, we have by Remark \ref{RestrictNonChar} that 
\[F_{p-1-\dim Z} \cH^{j-r} i_\xi^* \cM = 0 \text{ and } F_{p-\dim Z} \cH^{j-r} i_\xi^* \cM^{\mu+\Z} = 0\] for all $\mu \in (0,\lambda)$.

Thus, we get $m_{\alpha,x}(M) = 0$ for all $\lceil \alpha\rceil - 1 - \dim Z \leq p - 1 - \dim Z$, i.e., for all $\alpha \leq p$. Moreover, for any $\alpha$ with $\alpha - \lfloor \alpha \rfloor = \mu \in (0,\lambda)$, we get $m_{\alpha,x}(M) = 0$ for all $\lceil \alpha \rceil -1 - \dim Z \leq p - \dim Z$. This gives the vanishing for all $\alpha < p+\lambda$, as desired.
\end{proof}

\subsection{Bernstein-Sato polynomials} An important invariant of singularities for a hypersurface is the \emph{Bernstein-Sato polynomial}, which we will define through use of the $G$-filtration, following the notation of \cite{SaitoMicrolocal}. Let $f_1,\dots, f_r \in \cO_X(X)$ define a subvariety $Z$, not assumed to be a complete intersection, or even reduced or irreducible. Let $\cB_f = \bigoplus_{\alpha \in \N^r} \cO_X \de_t^\alpha \delta_f$ be the $\cD$-module obtained by pushing $\cO_X$ forward along the graph embedding $\Gamma\colon X \to X\times \A^r_t$. 

The module $\cB_f$ admits an exhaustive, $\Z$-indexed filtration $G^\bullet \cB_f$ defined by
\[ G^\bullet \cB_f \coloneqq V^\bullet \cD_{X\times \A^r_t} \cdot \delta_f.\]

We define the \emph{Bernstein-Sato polynomial} $b_f(s)$ to be the minimal polynomial of the action of $s = - \sum_{i=1}^r \de_{t_i} t_i$ on the associated graded module ${\rm gr}_G^0 \cB_f$. By work of Kashiwara, if $b_f(\gamma) = 0$ then $\gamma \in \Q_{<0}$.

A basic exercise in linear algebra shows that the following holds, by definition of $b_f(s)$:
\begin{equation} \label{GFiltRoots} (s+\gamma)^k \mid b_f(s) \iff (s+\gamma)^{k-1} {\rm Gr}_V^{\gamma} {\rm gr}_G^0 \cB_f \neq 0.\end{equation}

If $Z$ is a reduced complete intersection subvariety of $X$ of codimension $r$, then by restricting to $V \subseteq X$ such that $Z \cap V$ is smooth and dense in $Z$, we see that $s+r$ divides $b_f(s)$. Hence, we can define the \emph{reduced Bernstein-Sato polynomial} by the quotient $\widetilde{b}_f(s) \coloneqq b_f(s)/(s+r)$. If $r =1$, meaning $Z$ is a hypersurface, then Saito \cite{SaitoMicrolocal} defines the minimal exponent $\widetilde{\alpha}(f)$ to be the smallest root of $\widetilde{b}_f(-s)$.

\subsection{Definition and Properties of $\widetilde{\alpha}(Z)$} \label{subsect-minexp} In \cite{CDMO}, for $Z \subseteq X$ a local complete intersection subvariety, the minimal exponent $\widetilde{\alpha}(Z)$ is defined through the following construction: let $Y = X \times \A^r_y$. Assume that $Z$ is defined by $f_1,\dots, f_r \in \cO_X(X)$ and consider the function $g = \sum_{i=1}^r y_i f_i$ on $Y$. Let $U = Y \setminus (X\times \{0\})$ be the complement of the zero section in $Y$. We define
\[ \widetilde{\alpha}(Z) \coloneqq \widetilde{\alpha}(g \vert_U).\]

It is not hard to see that this agrees with the usual definition if $Z$ is a hypersurface. 

For $x\in Z$, the minimal exponent of $Z$ at $x$ is defined \cite[Defn. 4.16]{CDMO} to be $\widetilde{\alpha}_x(Z) \coloneqq \max_{x\in V} \widetilde{\alpha}(V,V\cap Z)$, where $x\in V$ is an open neighborhood of $x$ in $X$.

\begin{rmk} We review here some results concerning the minimal exponent for LCI varieties. Let $Z \subseteq X$ be a complete intersection subvariety of the smooth connected variety $X$ of pure codimension $r$ defined by $f_1,\dots, f_r \in \cO_X(X)$.

\begin{enumerate} \item (\cite[Rmk. 4.8]{CDMO}) The invariant $\widetilde{\alpha}(Z)$ does not depend on the choice of $r$ generators of the ideal defining $Z$. Hence, one can define the minimal exponent for subvarieties which are LCI but not necessarily complete intersection. 

\item (\cite[Prop. 4.14]{CDMO}) The difference $\widetilde{\alpha}(Z) - r$ depends only on $Z$ and not on the embedding $Z \subseteq X$.

\item (\cite[Rmk. 4.5]{CDMO}) Let ${\rm lct}(X,Z)$ be the log canonical threshold, then
\[ {\rm lct}(X,Z) = \min\{\widetilde{\alpha}(Z),r\}.\]

\item (\cite[Thm. 1.3]{CDMO}, \cite[Thm. 1.1]{CDM}) The variety $Z$ has at worst $k$-Du Bois singularities if and only if $\widetilde{\alpha}(Z) \geq r+k$. It has at worst $k$-rational singularities if and only if $\widetilde{\alpha}(Z) > r+k$. See \cite[Sect. 13]{MP3}, \cite[Sect. 2.4]{CDM} (resp. \cite[Sect. 2.5]{CDM}) for the definition of $k$-Du Bois singularities (resp. $k$-rational singularities).
\end{enumerate}

\end{rmk}

\section{Partial Microlocalization} \label{sect-microlocal} In this section, we define the map $\Psi$ from the introduction and describe its properties. We compare this construction to Saito's partial microlocalization in the case $r=1$ and also relate $\mu(B_f^H)$ to the vanishing cycles of the hypersurface $g$.

The map $\Psi$ is most easily understood on the dual vector bundle. Let $\mu(B_f^H) = {\rm FL}{\rm Sp}(B_f^H)$ be the microlocalization of the mixed Hodge module $B^H_f$. This is a monodromic mixed Hodge module on $E^\vee$, the dual bundle to $E$. Recall that we have fiber coordinates $y_1,\dots, y_r$ chosen on $E^\vee$. Then we can consider the submodule of $\mu(B_f^H)$ of sections with support on $X \times \{0\} = V(y_1,\dots, y_r) \subseteq E^\vee$. This leads to an injective morphism of mixed Hodge modules
\[ {\rm FL}(\Psi) \colon \Gamma_{X\times \{0\}}(\mu(B_f^H)) \to \mu(B_f^H).\]

By applying the inverse Fourier transform $\overline{\rm FL}$ \cite[Cor. 1.6]{CD} to ${\rm FL}(\Psi)$, we get a map of mixed Hodge modules on $E$,
\[ \Psi\colon L \to {\rm Sp}(B_f^H),\]
which is injective (as the map on the level of $\cD_X$-modules has not changed). We let $Q \coloneqq \text{coker}(\Psi)$, with underlying $\cD_E$-module $\cQ$.

\begin{rmk} As pointed out to the author by Sabbah, since the microlocalization is a \emph{localization}, one should be careful to use the \emph{derived} version of this construction, in general. As mentioned in the introduction, ${\rm FL}(Q)$ can be thought of as the image of the natural map $\mu(B_f^H) \to \cH^0 j_* j^*(\mu(B_f^H))$, where $j$ is the open embedding of the complement of the zero section into $E^{\vee}$. 

In general, it is possible that $j_* j^*(\mu(B^f_H))$ has non-trivial higher cohomology, however, this is not the case for an LCI subvariety, which is all that we consider in this paper. Indeed, the higher cohomology of $j_*j^*(\mu(B^f_H))$ is isomorphic to the higher cohomology of $\sigma_* \sigma^! \mu(B_f^H)$, where $\sigma \colon X \times \{0\} \to E^\vee$ is the inclusion of the zero section. But it is immediate from \cite[Thm. 1.2]{CD} and \cite[Thm. 1]{CDS} that $\sigma_* \sigma^! \circ {\rm FL} = {\rm FL} \circ I_* I^*[-2r]$, where $I\colon X \times \{0\} \to E$ is the inclusion of the zero section. Hence, taking cohomology, we need only show that $\cH^j I_* I^* {\rm Sp}(B_f^H) = 0$ for all $j > -r$. 

This vanishing is equivalent to the local complete intersection hypothesis. To see this, recall that one can characterize local complete intersections by the vanishing of lower local cohomology modules: $Z \subseteq X$ defined by $f_1,\dots, f_r$ is a complete intersection if and only if $\cH^j_Z(\cO_X) = 0$ for all $j < r$. These modules can be computed using $I_*I^! {\rm Sp}(\cB_f^H)$. By applying duality for $\cD$-modules, their vanishing can be determined from that of $\cH^j I_* I^* {\rm Sp}(\cB_f)$ for $j > -r$, using the fact that ${\rm Sp}$ commutes with duality and that $\cB_f$ is self dual.
\end{rmk}

By Kashiwara's equivalence (see Example \ref{KashiwaraEq}), the submodule $\Gamma_{X\times \{0\}}(\mu(\cB_f^H))$ can be described as $\cK[\partial_{y_1},\dots, \partial_{y_r}]$, where
\[ \cK = \bigcap_{i=1}^r \ker(y_i \colon {\rm Gr}_V^r \cB_f \to {\rm Gr}_V^{r-1} \cB_f),\]
where $\theta_y = \sum_{i=1}^r y_i\de_{y_i}$. Hence, the $\cD_E$-module underlying $L$ is
\[ \cL \coloneqq \cK[z_1,\dots, z_r], \text{ where }  \cK = \bigcap_{i=1}^r \ker(\partial_{t_i}\colon {\rm Gr}_V^r \cB_f \to {\rm Gr}_V^{r-1}\cB_f),\]
where $s= - \sum_{i=1}^r \de_{t_i} t_i$.

In particular, for any $\chi \notin \Z_{\geq r}$, the natural projection ${\rm Gr}_V^\chi \cB_f \to \cQ^\chi$ is an isomorphism.

We can give another, more geometric, description of the map $\Psi$. To do this, we will use the definition of the specialization given in Section \ref{sect-background}. Recall that we denote by $\widetilde{X}$ the deformation to the normal bundle of $X\times \A^r_t$ along $X\times \{0\}$.

By construction, we have the natural map $p\colon \widetilde{X} \to X \times \A^r_t$, and we can consider $p^*(\Gamma_* M) \in {\rm D}^b({\rm MHM}(\widetilde{X}))$. By factoring $q = p \circ j$, we have an isomorphism $j^* p^*(\Gamma_* M) = q^*(\Gamma_* M)$. Hence, by Equation \ref{sameNearbyCycles}, we conclude that
\[ \psi_u p^*(\Gamma_* M) = \psi_u j_* q^*(\Gamma_* M) = {\rm Sp}(\Gamma_* M).\]

The composition $\rho\colon X\times \A^r_z \xrightarrow[]{\iota} \widetilde{X} \xrightarrow[]{p} X\times \A^r_t$ is given by $(x,z_1,\dots, z_r) \mapsto (x,0,\dots, 0)$. Hence, we have a Cartesian diagram
\[ \begin{tikzcd} Z \times \A^r_z \ar[r,"i"] \ar[d] & X\times \A^r_z \ar[d,"\rho"] \\ X \ar[r,"\Gamma"] & X \times \A^r_t
\end{tikzcd}.\]

There is always a canonical natural transformation $\iota^* \to \psi_u$ in ${\rm D}^b({\rm MHM}(\widetilde{X}))$ (see \cite[(2.24.3)]{SaitoMHM}) from na\"{i}ve restriction to the nearby cycles functor. Looking at this morphism for $p^*(M)$, we get
\[ \rho^*(\Gamma_* M) \to {\rm Sp}(\Gamma_* M).\]

For $M = \Q_X^H[\dim X]$, we can use Saito's base-change result \cite[(4.4.3)]{SaitoMHM} to see that $\rho^*(B_f^H) = i_* \Q_{Z\times \A^r_z}^H[\dim X]$. When $Z$, hence $Z\times \A^r_z$, is a local complete intersection, by \cite[Thm. 5.1.20]{Dimca} the perverse sheaf underlying this mixed Hodge module is $\Q_{Z^{\rm an}\times \C^r}[\dim Z + r]$. Following the argument of \cite[(1.4.7)]{SaitoRationalPowers}, and using the fact that the functor ${\rm MHM}(-) \to {\rm Perv}(-)$ is faithful, we see that 
\begin{equation} \label{noQuotient} L = \rho^*(B_f^H) \text{ has no non-zero quotient objects which are supported on } Z_{\rm sing} \times \A^r \end{equation}

Using this we can see that the two methods of defining $\Psi$ agree, as they give the same morphism upon restricting to $V \times \A^r_z$, where $V \subseteq X$ is the open subset such that $V \cap Z = Z_{\rm reg}$. The morphism $\Psi\vert_{V\times \A^r_z}$ is actually an isomorphism, so the cokernel $Q$ is supported on $Z_{\rm sing} \times \A^r_z$.

We now compare our construction with what was done in the $r=1$ case. In \cite{SaitoMicrolocal}, in the case of a single function $h \in \cO_S$, Saito defines the \emph{algebraic partial microlocalization} as follows: let $\cB_h = \bigoplus_{k\in \N} \cO_S \de_\zeta^k \delta_h$ and set
\[ \widetilde{\cB}_h \coloneqq \bigoplus_{k \in \Z} \cO_S \de_{\zeta}^k \delta_h = \cB_h[\de_\zeta^{-1}].\]

The ring $\cD_{S \times \A^1_{\zeta}}[\de_\zeta^{-1}]$ admits a $\Z$-indexed $V$-filtration given by 
\[ V^\bullet \cD_{S\times \A^1_{\zeta}}[\de_\zeta^{-1}] = \sum_{k \in \Z} \de_\zeta^{-k} V^{\bullet-k} \cD_{S\times \A^1_{\zeta}}.\]

Saito defines the $V$-filtration on $\widetilde{\cB}_h$ by
\[ V^\gamma \widetilde{\cB}_h \coloneqq \begin{cases} V^\gamma \cB_h + \cO_S[\de_\zeta^{-1}]\de_{\zeta}^{-1}\delta_h & \gamma \leq 1\\ \de_\zeta^{-j} V^{\gamma-j}\widetilde{\cB}_h & \gamma > 1, 0 < \gamma - j \leq 1\end{cases},\]
which is compatible with the filtration $V^\bullet$ on $\cD_{S\times \A^1_{\zeta}}[\de_\zeta^{-1}]$.

The operator $s = -\de_{\zeta} \zeta$ satisfies $(s+\lambda)^N V^{\lambda}\widetilde{\cB}_h \subseteq V^{>\lambda}\widetilde{\cB}_h$ for some $N \gg 0$, using the corresponding property for the usual $V$-filtration. From this, we see that for all $\lambda \neq 0$, the natural map $\zeta: {\rm Gr}_V^{\lambda} \widetilde{\cB}_h \to {\rm Gr}_V^{\lambda+1} \widetilde{\cB}_h$ is an isomorphism.

If we define a filtration $F_\bullet \widetilde{\cB}_h$ by 
\[ F_p \widetilde{\cB}_h = \bigoplus_{k \leq p + \dim S} \cO_S \de_\zeta^k \delta_h,\]
then we can quickly see (as in \cite[(2.1.4)]{SaitoMicrolocal}) that the natural map $\cB_h \to \widetilde{\cB}_h$, which obviously preserves the $V$-filtration, satisfies
\[F_p {\rm Gr}_V^\gamma \cB_h \cong F_p {\rm Gr}_V^{\gamma}\widetilde{\cB}_h \text{ for all } \gamma < 1,\]
and so we see that the vanishing cycles
\begin{equation}\label{microlocalComputeVanCycles} \phi_h(\Q_S^H[\dim S]) \text{ has } \bigoplus_{\lambda \in [0,1)} ({\rm Gr}_V^{\lambda}\widetilde{\cB}_h,F[\lceil \lambda\rceil ]) \text{ as underlying filtered }\cD\text{-module,}\end{equation}
where for any $\ell \in \Z$, the shifted filtration $F[\ell]_\bullet$ is equal to $F_{\bullet-\ell}$.

\begin{lem} \label{comparer1} The kernel of the canonical map ${\rm Gr}_V^1 \cB_h \to {\rm Gr}_V^1 \widetilde{\cB}_h$ is equal to \[\cK = \ker(\de_\zeta\colon {\rm Gr}_V^1 \cB_h \to {\rm Gr}_V^0\cB_h).\] Hence the module $\cQ$ agrees with the image of the natural morphism $\bigoplus_\chi {\rm Gr}_V^\chi \cB_h \to \bigoplus_\chi {\rm Gr}_V^\chi \widetilde{\cB}_h$.
\end{lem}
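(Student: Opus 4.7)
The plan is to compute $\ker({\rm Gr}_V^{1}\cB_h \to {\rm Gr}_V^{1}\widetilde{\cB}_h)$ directly from the definition of the $V$-filtration on $\widetilde{\cB}_h$, and then bootstrap this calculation to every graded piece in order to identify $\ker({\rm Sp}(\cB_h) \to {\rm Sp}(\widetilde{\cB}_h))$ with $\Psi(\cL)$.

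For the first (main) assertion, I would start from the recursion $V^{>1}\widetilde{\cB}_h = \de_\zeta^{-1}V^{>0}\widetilde{\cB}_h$ combined with the defining formula $V^{>0}\widetilde{\cB}_h = V^{>0}\cB_h + \cO_S[\de_\zeta^{-1}]\de_\zeta^{-1}\delta_h$. An element $m \in V^1\cB_h$ maps to zero in ${\rm Gr}_V^1\widetilde{\cB}_h$ iff $m \in V^{>1}\widetilde{\cB}_h$, equivalently $\de_\zeta m \in V^{>0}\cB_h + \cO_S[\de_\zeta^{-1}]\de_\zeta^{-1}\delta_h$. The decisive observation is that, viewed inside $\widetilde{\cB}_h = \bigoplus_{k\in\Z}\cO_S\de_\zeta^k \delta_h$, the subspaces $\cB_h$ and $\cO_S[\de_\zeta^{-1}]\de_\zeta^{-1}\delta_h$ intersect trivially, since the former uses only nonnegative and the latter only strictly negative powers of $\de_\zeta$. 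Because $\de_\zeta m \in \cB_h$, this decouples the two summands and forces $\de_\zeta m \in V^{>0}\cB_h$. Passing to gradeds, $\overline{m} \in \ker(\de_\zeta\colon {\rm Gr}_V^1\cB_h \to {\rm Gr}_V^0\cB_h) = \cK$. The converse is immediate, since $\de_\zeta^{-1}V^{>0}\cB_h \subseteq V^{>1}\widetilde{\cB}_h$ by the recursive formula.

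For the second assertion, I would compare $\cQ = {\rm Sp}(\cB_h)/\Psi(\cL)$ with ${\rm Sp}(\cB_h)/\ker({\rm Sp}(\cB_h)\to {\rm Sp}(\widetilde{\cB}_h))$ one monodromic degree at a time. Since $\cL = \cK[z]$ with $\cK \subseteq {\rm Gr}_V^1 \cB_h$ and the action of $z$ on ${\rm Sp}(\cB_h)$ is the action of $\zeta$, we have $\Psi(\cL)^\chi = \zeta^n \cK$ for $\chi = 1+n$ with $n \geq 0$, and $\Psi(\cL)^\chi = 0$ otherwise. For $\chi < 1$, Saito's isomorphism ${\rm Gr}_V^\gamma \cB_h \cong {\rm Gr}_V^\gamma \widetilde{\cB}_h$ (already cited in the excerpt) shows the kernel vanishes, matching. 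For $\chi = 1+n$ with $n \geq 1$, I would use the commutative square with horizontal arrows given by the natural maps and vertical arrows given by multiplication by $\zeta$. By property (4) of the $V$-filtration, $\de_\zeta\zeta$ acts on ${\rm Gr}_V^j$ with generalized eigenvalue $j$, which is invertible for $j \geq 1$; hence $\zeta\colon {\rm Gr}_V^j \to {\rm Gr}_V^{j+1}$ is an isomorphism on both $\cB_h$ and $\widetilde{\cB}_h$, so the kernel in degree $1+n$ equals $\zeta^n$ applied to $\cK$. For non-integer $\chi > 1$, the same $\zeta$-isomorphism argument reduces the kernel computation to a grade $\chi - k \in (0,1)$, where it vanishes by Saito.

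The main obstacle is the bookkeeping in the first step: the $V$-filtration on $\widetilde{\cB}_h$ is defined recursively for $\gamma > 1$ via $\de_\zeta^{-1}$, and one must correctly intersect its generating terms with $\cB_h$. Once the identification of the kernel in degree $1$ is nailed down, the extension to other degrees is mostly formal, using Saito's previously cited isomorphism below $1$ and the monodromy analysis of $\de_\zeta\zeta$ above $1$.
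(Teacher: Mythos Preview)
Your proposal is correct and follows essentially the same route as the paper's proof: the kernel computation in degree $1$ is exactly the paper's argument (write $\de_\zeta m = v + \epsilon$ and use $\cB_h \cap \cO_S[\de_\zeta^{-1}]\de_\zeta^{-1}\delta_h = 0$ to force $\epsilon = 0$), and the extension to all degrees via the commuting square with $\zeta$-multiplication, together with Saito's isomorphism for $\gamma < 1$, is precisely what the paper does. Your case breakdown and justification of why $\zeta$ is an isomorphism on ${\rm Gr}_V^\lambda$ for $\lambda \neq 0$ are in fact spelled out a bit more carefully than in the paper.
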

\begin{proof} Let $u  \in V^1 \cB_h$ be an element which maps to $0$ in ${\rm Gr}_V^1 \widetilde{\cB}_h$. This means
\[u\in V^{>1}\widetilde{\cB}_h = \de_\zeta^{-1} V^{>0}\widetilde{\cB}_h,\]
which means
\[ \de_{\zeta} u = v + \epsilon \quad \text{with}\quad  v\in V^{>0}\cB_h \text{ and } \epsilon\in \cO_{Y\times \A^1}[\de_{\zeta}^{-1}]\de_\zeta^{-1}\delta_h.\]

As $\epsilon = \de_{\zeta} u - v\in \cB_h$, this forces $\epsilon = 0$. So $\de_{\zeta} u \in V^{>0}\cB_h$.

For the second claim, we have already mentioned the isomorphisms ${\rm Gr}_V^{\lambda}\cB_h \to {\rm Gr}_V^{\lambda}\widetilde{\cB}_h$ for $\lambda < 1$. From the commutative diagram with horizontal maps being isomorphisms for $\lambda \neq 0$,
\[ \begin{tikzcd} {\rm Gr}_V^\lambda \cB_h \ar[r,"\zeta"] \ar[d] & {\rm Gr}_V^{\lambda+1} \cB_h \ar[d]\\ {\rm Gr}_V^{\lambda} \widetilde{\cB}_h \ar[r,"\zeta"] & {\rm Gr}_V^{\lambda+1} \widetilde{\cB}_h\end{tikzcd}\]
we conclude inductively that for all $\lambda \notin \Z$, the map ${\rm Gr}_V^{\lambda}\cB_h \to {\rm Gr}_V^{\lambda}\widetilde{\cB}_h$ is an isomorphism.

By the first claim in this lemma, we see that the image of ${\rm Gr}_V^1 \cB_h \to {\rm Gr}_V^1 \widetilde{\cB}_h$ is isomorphic to $\cQ^1 = \cK$. The claim for $j\in \Z_{\geq 1}$ then follows again using multiplication by $\zeta$.
\end{proof}

Now, we let $Z = V(f_1,\dots, f_r) \subseteq X$ and consider $Y = X \times \A^r_y$ as in the definition of the minimal exponent of $Z$. We consider the function $g = \sum_{i=1}^r y_i f_i$.

We define subspaces $\widetilde{\cB}_g^{(k)} \coloneqq \bigoplus_{\alpha \in \N^r} \cO_X y^\alpha \de_{\zeta}^{|\alpha|-k} \delta_g \subseteq \widetilde{\cB}_g$, which give a direct sum decomposition 
\[ \widetilde{\cB}_g = \bigoplus_{k \in \Z} \widetilde{\cB}_g^{(k)}.\]

By \cite[(2.2.3)]{SaitoMicrolocal}, we have bifiltered isomorphisms
\[ \de_\zeta^k\colon F_p V^\lambda \widetilde{\cB}_g \to F_{p+k} V^{\lambda-k}\widetilde{\cB}_g,\]
which induce, for all $\ell,p\in \Z, \lambda \in \Q$, isomorphisms 
\begin{equation} \label{MicrolocalBifiltIso}\de_{\zeta}^k\colon F_p {\rm Gr}_V^{\lambda} \widetilde{\cB}_g^{(\ell)} \cong F_{p+k}{\rm Gr}_V^{\lambda-k} \widetilde{\cB}_g^{(\ell-k)}.\end{equation}

It is easy to see that $\widetilde{\cB}_g^{(k)}$ is the $k$-eigenspace of the operator $\theta_y - s$, where $\theta_y = \sum_{i=1}^r y_i\partial_{y_i}$ and $s = -\de_{\zeta}\zeta$. Hence, as this operator preserves $V^\gamma \widetilde{\cB}_g$ for all $\gamma \in \Q$, we see that these subspaces decompose into their eigenspaces. This is clearly true for $F_p V^\gamma \widetilde{\cB}_g$, too. In particular,
\[ F_p {\rm Gr}_V^\gamma \widetilde{\cB}_g  = \bigoplus_{k \in \Z} F_p {\rm Gr}_V^{\gamma} \widetilde{\cB}_g^{(k)}.\]

By equation \ref{microlocalComputeVanCycles} and definition of the Hodge filtration on nearby cycles, we have defined a filtered isomorphism
\begin{equation} \label{defineRho}\rho: (\phi_g(\cO_Y),F) \cong \bigoplus_{\lambda \in [0,1)}\bigoplus_{k\in \Z} ({\rm Gr}_V^{\lambda} \widetilde{\cB}_g^{(k)},F[\lceil \lambda\rceil]).\end{equation}

We recall the morphism $\varphi\colon \widetilde{\cB}_g \to \cB_f$ defined in \cite{CDMO}. It is the unique $\cO_X$-linear map such that
\[ \varphi(y^\alpha \de_\zeta^j \delta_g) = \de_t^\alpha \delta_f \text{ for all } \alpha \in \N^r, j\in \Z.\]

We will use a variant of this $\varphi$ which is defined to more closely match the $\cD_Y$-module structure of the Fourier-Laplace transform. In particular, in this paper, we define $\varphi\colon \widetilde{\cB}_g \to \cB_f$ as the unique $\cO_X$-linear map such that
\[ \varphi(y^\alpha \de_\zeta^j \delta_g) = (-1)^{j+|\alpha|} \de_t^\alpha \delta_f \text{ for all } \alpha \in \N^r, j\in \Z.\]

It is easy to see that $\varphi$ restricted to any summand $\widetilde{\cB}_g^{(k)}$ is bijective. We recall some other properties of the map $\varphi$, whose proofs are identical to those found in the proof of \cite[Prop. 3.2]{CDMO}:
\begin{lem} \label{PhiProperties} For $\varphi\colon \widetilde{\cB}_g \to \cB_f$ defined as above, the following hold:\hfill
\begin{enumerate} \item The map $\varphi$ is $\cD_X$-linear.

\item For all $u \in \widetilde{\cB}_g$, we have $\varphi(\de_\zeta u) = - \varphi(u)$.

\item For all $u \in \widetilde{\cB}_g$, we have $\varphi(y_i u) = -\de_{t_i} \varphi(u)$.

\item For all $u \in \widetilde{\cB}_g$, we have $\varphi(\de_{y_i} u) = t_i \varphi(u)$.

\item $($\cite[Thm 3.3]{CDMO}$)$ For every $\gamma \in \Q$ and $p,k \in \Z$, we have \[F_p V^\gamma \widetilde{\cB}^{(k)}_g = \varphi^{-1}(F_{p+r+k} V^{\gamma-k} \cB_f) \cap \widetilde{\cB}_g^{(k)}.\]
\end{enumerate}
\end{lem}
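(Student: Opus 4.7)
The idea is that everything except item (5) is a direct computation on generators, using the fact that the elements $y^{\alpha} \partial_{\zeta}^{j} \delta_{g}$ ($\alpha\in\N^r$, $j\in\Z$) form an $\cO_X$-basis of $\widetilde{\cB}_g$ compatible with the definition of $\varphi$; item (5) is quoted from the reference.

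The plan is to verify each relation by testing it on a typical generator $u = y^{\alpha}\partial_{\zeta}^{j}\delta_{g}$ and extending by $\cO_X$-linearity. The basic calculus rules one needs in $\widetilde{\cB}_g$ are the chain-rule identities arising from the graph embedding of $g = \sum y_i f_i$ along $\zeta-g$, namely $\xi\cdot \delta_g = -\xi(g)\partial_{\zeta}\delta_g$ for any vector field $\xi$ on $X$, and $\partial_{y_i}\cdot \delta_g = -f_i\, \partial_{\zeta}\delta_g$; on the $\cB_f$ side the analogous identity is $\xi\cdot\delta_f = -\sum_i \xi(f_i)\partial_{t_i}\delta_f$, together with the defining relation $t_i\,\delta_f = f_i\,\delta_f$, which gives $t_i\,\partial_t^{\alpha}\delta_f = f_i\,\partial_t^{\alpha}\delta_f - \alpha_i\,\partial_t^{\alpha-e_i}\delta_f$ via the commutator $[t_i,\partial_t^{\alpha}] = -\alpha_i\partial_t^{\alpha-e_i}$.

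Items (2) and (3) are immediate from the definition: $\partial_{\zeta}(y^{\alpha}\partial_{\zeta}^{j}\delta_g) = y^{\alpha}\partial_{\zeta}^{j+1}\delta_g$ and $y_i(y^{\alpha}\partial_{\zeta}^{j}\delta_g) = y^{\alpha+e_i}\partial_{\zeta}^{j}\delta_g$ get sent, respectively, to $\partial_t^{\alpha}\delta_f = \varphi(u)$ and to $\partial_t^{\alpha+e_i}\delta_f = \partial_{t_i}\varphi(u)$, since $\varphi(y^{\alpha}\partial_{\zeta}^{j}\delta_g)$ depends only on $\alpha$. For item (1), I would compute $\xi\cdot u = -y^{\alpha}\partial_{\zeta}^{j+1}\xi(g)\delta_g = -\sum_i \xi(f_i) y_i y^{\alpha}\partial_{\zeta}^{j+1}\delta_g$ (using that $y_i$ and $\partial_{\zeta}$ commute with $\xi$), apply $\varphi$ to get $-\sum_i \xi(f_i)\partial_t^{\alpha+e_i}\delta_f$, and then observe that $\xi\cdot\varphi(u) = \partial_t^{\alpha}(\xi\cdot\delta_f) = -\sum_i \xi(f_i)\partial_t^{\alpha+e_i}\delta_f$ as well. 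Compatibility with $\cO_X$-multiplication is built into the definition. For item (4), I compute
\[ \partial_{y_i}(y^{\alpha}\partial_{\zeta}^{j}\delta_g) \;=\; \alpha_i\, y^{\alpha-e_i}\partial_{\zeta}^{j}\delta_g \;-\; f_i\, y^{\alpha}\partial_{\zeta}^{j+1}\delta_g, \]
whose image under $\varphi$ is $\alpha_i\partial_t^{\alpha-e_i}\delta_f - f_i\partial_t^{\alpha}\delta_f$, and match it against $-t_i\,\varphi(u) = -t_i\,\partial_t^{\alpha}\delta_f$, which by the commutator identity recalled above equals precisely the same expression.

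Finally, item (5) is the content of \cite[Thm.~3.3]{CDMO} and will simply be cited. The only place where care is needed is item (4), where the nontrivial commutator between $t_i$ and $\partial_t^{\alpha}$ has to be handled cleanly; this is what I expect to be the main point of the verification, though it is still a routine calculation once the chain-rule identity $\partial_{y_i}\delta_g = -f_i\,\partial_{\zeta}\delta_g$ is in place.
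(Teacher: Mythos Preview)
Your proposal is correct: the verifications of (1)--(4) on the $\cO_X$-basis $y^{\alpha}\partial_{\zeta}^{j}\delta_g$ are accurate, including the commutator step in (4), and citing \cite[Thm.~3.3]{CDMO} for (5) is appropriate. The paper itself does not give a proof here but simply recalls the lemma from \cite[Prop.~3.2]{CDMO} (with a remark on the Hodge filtration indexing for (5)); your argument is exactly the routine computation one would expect that reference to contain, so the approaches coincide.
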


The last claim is not stated with the Hodge filtration in \cite{CDMO} and it is only stated there for $k = 0$. Using the action of $\de_\zeta^k$ and Equation \ref{MicrolocalBifiltIso}, the claim for arbitrary $k$ reduces to that for $k = 0$. The Hodge filtration is easy to handle, keeping in mind that we are indexing with the conventions of right $\cD$-modules.

\begin{prop} \label{justification} The map
\[\overline{\varphi}: \bigoplus_{\lambda \in [0,1)} \bigoplus_{k \in \Z} {\rm Gr}_V^{\lambda} \widetilde{\cB}_g^{(k)},F[\lceil \lambda\rceil]) \to (\mu(\cB_f),F[-r])\]
induced by $\varphi$ is an isomorphism of filtered $\cD_Y$-modules. Hence, the composition
\[ \overline{\varphi} \circ \rho: (\phi_g(\cO_Y),F) \to (\mu(\cB_f),F[-r])\]
is an isomorphism of filtered $\cD_Y$-modules.
\end{prop}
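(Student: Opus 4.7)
The plan is to reduce the claim to Lemma \ref{PhiProperties}(5), working summand by summand. The $(\lambda, k)$-summand of the source is, by \eqref{MicrolocalSecondIso}, naturally identified with $\partial_\zeta^{-k} {\rm Gr}_V^{\lambda-k} \widetilde{\cB}_g^{(0)}$, i.e.\ the $k$-eigenspace of $\theta_y - s$ inside ${\rm Gr}_V^\lambda \widetilde{\cB}_g$. On this summand, define $\overline{\varphi}$ as the map induced by $\varphi$; by Lemma \ref{PhiProperties}(2), $\varphi(\partial_\zeta^{-k} v) = \varphi(v)$, so $\overline{\varphi}$ is just $\varphi|_{\widetilde{\cB}_g^{(0)}}$ applied to ${\rm Gr}_V^{\lambda-k} \widetilde{\cB}_g^{(0)}$, landing in ${\rm Gr}_V^{\lambda-k} \cB_f = \mu(\cB_f)^{r - \lambda + k}$.

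Bijectivity and the Hodge match on each summand are then immediate from Lemma \ref{PhiProperties}(5), which supplies a bifiltered bijection $(V^\gamma \widetilde{\cB}_g^{(0)}, F_\bullet) \cong (V^\gamma \cB_f, F_{\bullet + r})$, hence $F_p {\rm Gr}_V^\chi \widetilde{\cB}_g^{(0)} \cong F_{p+r} {\rm Gr}_V^\chi \cB_f$. Since $(\lambda, k) \mapsto \lambda - k$ is a bijection $[0,1) \times \Z \to \Q$, summing yields a bijection of underlying $\cO_X$-modules onto $\bigoplus_\chi {\rm Gr}_V^\chi \cB_f = \mu(\cB_f)$. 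For the filtration shifts: on the $(\lambda, k)$-summand, $F[\lceil\lambda\rceil - k]_p = F_{p - \lceil\lambda\rceil + k}$ on ${\rm Gr}_V^{\lambda-k} \widetilde{\cB}_g^{(0)}$, which maps to $F_{p - \lceil\lambda\rceil + k + r} {\rm Gr}_V^{\lambda-k} \cB_f$; on the target, \eqref{HodgeFL} combined with the shift $F[-r]$ gives $F[-r]_p \mu(\cB_f)^{r - \lambda + k} = F_{p + r - \lceil\lambda - k\rceil} {\rm Gr}_V^{\lambda - k} \cB_f$, and since $\lceil\lambda - k\rceil = \lceil\lambda\rceil - k$ for $k \in \Z$, the two coincide.

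It remains to verify $\cD_Y$-linearity. $\cD_X$-linearity is Lemma \ref{PhiProperties}(1). For the fiber directions I would use (3) and (4) of that same lemma, namely $\varphi(y_i u) = \partial_{t_i} \varphi(u)$ and $\varphi(\partial_{y_i} u) = -t_i \varphi(u)$, together with the Fourier--Laplace conventions that on $\mu(\cB_f) = {\rm FL}({\rm Sp}(\cB_f))$ the operators $y_i$ and $\partial_{y_i}$ act as $-\partial_{z_i}$ and $z_i$ on ${\rm Sp}(\cB_f)$, and with the identification of $z_i, \partial_{z_i}$ on the specialization with the graded actions of $t_i, \partial_{t_i}$ on $\cB_f$ via the $V$-filtration. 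I expect this final verification to be the main obstacle: one must track Fourier--Laplace signs and the descent of $\partial_{t_i}, t_i$ through specialization carefully. Once in hand, composing the bifiltered isomorphism $\overline{\varphi}$ with the filtered isomorphism $\rho$ of \eqref{defineRho} yields the stated isomorphism $(\phi_g(\cO_Y), F) \cong (\mu(\cB_f), F[-r])$.
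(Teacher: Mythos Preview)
Your proposal is correct and follows essentially the same approach as the paper: both arguments work summand by summand, invoke Lemma~\ref{PhiProperties}(5) to get the bifiltered bijection ${\rm Gr}_V^{\lambda-k}\widetilde{\cB}_g^{(0)} \cong {\rm Gr}_V^{\lambda-k}\cB_f$ with the shift by $r$ in the Hodge filtration, and then match indices against formula~\eqref{HodgeFL}. The paper's proof is in fact terser than yours---it splits into the cases $\lambda = 0$ and $\lambda \in (0,1)$ rather than using $\lceil \lambda - k\rceil = \lceil \lambda\rceil - k$ uniformly, and it does not discuss $\cD_Y$-linearity at all---so your added paragraph on verifying compatibility with $y_i,\partial_{y_i}$ via Lemma~\ref{PhiProperties}(3),(4) and the Fourier--Laplace conventions is more than the paper provides, and your caution about sign-tracking there is well placed.
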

\begin{proof} By Property (5) of Lemma \ref{PhiProperties}, we see that the map $\varphi$ induces for any $\lambda \in [0,1)$ an isomorphism
\[ F_{p-\lceil \lambda\rceil }{\rm Gr}_V^{\lambda} \widetilde{\cB}_g^{(k)} \cong \begin{cases} F_{p+k+r}{\rm Gr}_V^{-k} \cB_f & \lambda = 0 \\  \bigoplus_{k \in \Z} F_{p-1+k+r}{\rm Gr}_V^{\lambda-k} \cB_f& \lambda \in (0,1) \end{cases}.\]

By taking the direct sum over $k\in \Z$ and using the formula \ref{HodgeFL} (noting that $\lceil \gamma \rceil = 1$ for $\gamma \in (0,1)$), we conclude that this is isomorphic to $F_{p+r} {\rm FL}{\rm Sp}(\cB_f) = F_{p+r}\mu(\cB_f)$.
\end{proof}

This gives some heuristic reason why the microlocalization $\mu(\cB_f)$ and the map $\Psi$ should help us understand $\widetilde{\alpha}(Z) = \widetilde{\alpha}(g\vert_U)$.

\section{Results on Spectrum} \label{sect-spectrum} We recall the notation from Section \ref{sect-background}. For $x\in Z$, we let $i_x\colon \{x\} \times \A^r_z \to X \times \A^r_z$ be the closed embedding, and for $\xi \in \{x\} \times \A^r_z$ we let $j_\xi\colon \{\xi\} \to \{x\}\times \A^r_z$ and $i_\xi \coloneqq i_x \circ j_\xi$ be the corresponding closed embeddings. The spectrum of a monodromic mixed Hodge module $M$ at $x$ is defined as
\[ \widehat{\rm Sp}(\cM,x) = \sum m_{\alpha,x}(\cM) t^\alpha,\]
where
\[ m_{\alpha,x}(\cM) \coloneqq \sum_{k\in \Z} (-1)^k \dim_\C {\rm Gr}^F_{\lceil \alpha \rceil -\dim Z -1} \cH^{k-r} i_\xi^* \cM^{\alpha + \Z}.\]

Recall that the \emph{reduced Spectrum} of $Z$ at $x$ is by definition
\[ {\rm Sp}(Z,x) = \widehat{\rm Sp}(Z,x) + (-t)^{\dim Z +1} = \sum \overline{m}_{\alpha,x} t^\alpha,\]
where we are defining $\overline{m}_{\alpha,x}$ by this formula.

\begin{lem} We have $\widehat{\rm Sp}(i_* \Q_{Z\times \A^r_z}^H[\dim X],x) = -(-t)^{\dim Z +1}$. Hence, 
\[ {\rm Sp}(Z,x) = \widehat{\rm Sp}(Q,x).\]
\end{lem}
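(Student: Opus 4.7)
The plan is to deduce the second assertion from the first via the short exact sequence $0 \to L \to {\rm Sp}(B_f^H) \to Q \to 0$ of monodromic mixed Hodge modules defining $Q$. By additivity of $\widehat{\rm Sp}$ recalled in Section \ref{sect-background}, we have $\widehat{\rm Sp}({\rm Sp}(B_f^H),x) = \widehat{\rm Sp}(L,x) + \widehat{\rm Sp}(Q,x)$, and since $\widehat{\rm Sp}({\rm Sp}(B_f^H),x) = \widehat{\rm Sp}(Z,x) = {\rm Sp}(Z,x) - (-t)^{\dim Z+1}$ by definition, the identity ${\rm Sp}(Z,x) = \widehat{\rm Sp}(Q,x)$ reduces to $\widehat{\rm Sp}(L,x) = -(-t)^{\dim Z+1}$.

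To establish this, first observe that $\cL = \cK[z_1,\ldots,z_r]$ has monodromic eigenvalues concentrated in $\Z_{\geq r}$, so $\cL^{\alpha+\Z} = 0$ unless $\alpha \in \Z$ (in which case $\cL^{\alpha+\Z} = \cL$); hence only integer $\alpha$ contribute to the spectrum of $L$. For such $\alpha$ the main task is to compute the Hodge numbers of $\cH^{k-r}(i_\xi^*\cL)$. I would use the base change identification $L \cong \pi^* N$, where $\pi\colon X\times\A^r_z \to X$ is the smooth projection and $N = (i_Z)_*\Q^H_Z[\dim X]$; this follows from $L \cong i_*\Q^H_{Z\times\A^r_z}[\dim X]$ together with $\Q^H_{Z\times\A^r_z} \cong p^*\Q^H_Z$ for $p\colon Z\times\A^r_z \to Z$. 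Applying base change along the Cartesian square associated to $i_x$ gives $i_x^*L \cong \pi_x^* k_x^* N$, and using that $k_x$ factors through $i_Z$ for $x \in Z$ together with $i_Z^* (i_Z)_* \cong \id$ on Hodge modules supported on $Z$, one identifies $k_x^* N$ with $\Q^H[\dim X]$, the trivial mixed Hodge structure placed in cohomological degree $-\dim X$.

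Propagating through the smooth pullback $\pi_x^*$ and the non-characteristic closed embedding $j_\xi^*$ (non-characteristic by the genericity of $\xi$), and using Remark \ref{RestrictNonChar} to track the Hodge filtration, one obtains that $\cH^k(i_\xi^*\cL)$ is nonzero only for $k = -\dim X$, where it is $\C$ with Hodge filtration satisfying ${\rm Gr}^F_0 = \C$ and all other graded pieces zero. Substituting into the definition of $m_{\alpha,x}(L)$, the non-vanishing of $\cH^{k-r}(i_\xi^*\cL)$ forces $k = -\dim Z$, and combined with $\lceil\alpha\rceil - \dim Z - 1 = 0$ and $\alpha \in \Z$, forces $\alpha = \dim Z + 1$. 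This yields $m_{\dim Z+1,x}(L) = (-1)^{\dim Z}$ and $m_{\alpha,x}(L) = 0$ otherwise, so $\widehat{\rm Sp}(L,x) = (-1)^{\dim Z} t^{\dim Z + 1} = -(-t)^{\dim Z+1}$, as desired. The main technical obstacle is tracking the cohomological degree and Hodge filtration shifts through the chain of pullbacks $k_x^*, \pi_x^*, j_\xi^*$, particularly since $x$ may be singular in $Z$; the LCI hypothesis enters crucially by guaranteeing (via \cite[Thm. 5.1.20]{Dimca}) that the underlying perverse sheaf of $L$ is the constant sheaf $\Q_{Z^{\rm an}\times\C^r}[\dim X]$, so that the stalk mixed Hodge structure at a generic $\xi$ is indeed trivial rather than a more complicated complex arising from non-LCI singularities.
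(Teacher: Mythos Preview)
Your proof is correct and follows essentially the same route as the paper: both compute $i_\xi^* L$ by factoring through $Z$ (or $Z\times\A^r_z$), invoking $i^*i_* \cong \id$ for closed embeddings, and then using the definition $\Q^H_W = k_W^*\,\Q^H$ to reduce to the trivial Hodge structure shifted by $[\dim X]$; the index check at the end is the same. Your detour through $L \cong \pi^* N$ with $N = (i_Z)_*\Q^H_Z[\dim X]$ is an extra but harmless step---the paper simply factors $i_x = i\circ\iota_x$ directly.

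One remark: your closing comment that ``the LCI hypothesis enters crucially'' in this computation is misplaced. The identification $i_\xi^*(i_*\Q^H_{Z\times\A^r_z}[\dim X]) \cong \Q^H[\dim X]$ is purely formal from the definition of $\Q^H_{(-)}$ as a pullback from a point and from base change; it holds for any $Z$ and any $x\in Z$, singular or not, LCI or not. The role of \cite[Thm.~5.1.20]{Dimca} in the paper is elsewhere (to ensure $L$ is a single mixed Hodge module and to establish property~\eqref{noQuotient}), not in this stalk computation. So you may drop the worry about ``$x$ singular in $Z$'' and the invocation of the LCI hypothesis here.
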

\begin{proof} As $x\in Z$, the inclusion $i_x\colon \{x\} \times \A^r_z \to X \times \A^r_z$ factors through $i\colon Z \times \A^r_z \to X\times \A^r_z$, say $i_x = i\circ \iota_x$. Then, because $i^* i_* = \text{id}$, we have
\[ i_x^* (i_* \Q_{Z\times \A^r_z}^H[\dim X]) = \iota_x^* \Q_{Z \times \A^r_z}^H[\dim X] = \Q_{\{x\}\times \A^r}^H[\dim X],\]
where the last equality is by definition \ref{trivialHodgeModule} of $\Q_W^H$ on any complex algebraic variety $W$.

Applying $j_\xi^*$ to this, we get $\Q_{\{\xi\}}^H[\dim X]$, again by the definition. From here the claim reduces to checking indices, which is left to the reader.
\end{proof}

\begin{proof}[Proof of Theorem \ref{thm-Spectrum}] The proof is divided into two steps: the first is to show vanishing of the spectral numbers, which can be done without assuming isolated singularities.

The claim of Theorem \ref{thm-Spectrum} concerns the local minimal exponent $\widetilde{\alpha}_x(Z)$ for $x\in Z$. This is defined as $\widetilde{\alpha}(V\cap Z)$ for any small enough open neighborhood $x\in V \subseteq X$. However, the inclusion $i_x\colon \{x\} \times \A^r_z \to X\times \A^r_z$ clearly factors through $V \times \A^r_z$, so by replacing $X$ by $V$, we can work with the global minimal exponent $\widetilde{\alpha}(Z)$.

We will use Lemma \ref{lem-SpectralNumberLowerBound}, so we need to study $F_{p-\dim X} \cQ$.

Assume that $\widetilde{\alpha}_x(Z) > r-1$. If $\widetilde{\alpha}_x(Z) \in (r-1,r]$, then we have $F_{-\dim X} \cB_f \subseteq V^{\widetilde{\alpha}_x(Z)} \cB_f$.

If $\widetilde{\alpha}_x(Z) > r$, then by \cite[Thm 1.1]{CDMO}, we have for $\lambda \in (0,1], p \in \Z_{\geq 0}$ that
\[ \widetilde{\alpha}(Z) \geq r+p+\lambda \iff F_{p+1-\dim X} \cB_f \subseteq V^{r-1+\lambda}\cB_f.\]

To handle both cases at once, write $\widetilde{\alpha}_x(Z) = r-1+p+\lambda$ for some $p \in \Z_{\geq 0}$. Then we see that $F_{p-\dim X} \cB_f \subseteq V^{r-1+\lambda}\cB_f$ and it is not contained in $V^{>r-1+\lambda}\cB_f$. For any $\chi < r-1+\lambda$, the first containment shows
\[ F_{p-\dim X} {\rm Gr}_V^{\chi}(\cB_f) = 0,\]
and in particular, for any $\mu \in (0,\lambda)$, we have
\[ F_{p - \dim X} \cQ^{r-1+\mu} = F_{p-\dim X} {\rm Gr}_V^{r-1+\mu}(\cB_f) = 0,\]
where the first equality follows from $\mu \notin \Z$. Using the filtered surjection \ref{FilteredSurj}, we see that $F_{p-\dim X} \cQ^{\mu+\Z} = 0$ for $\mu \in (0,\lambda)$.

We see also that $F_{p-1-\dim X} \cQ = 0$. For $p = 0$, this is obvious. For $p > 0$, we have $\widetilde{\alpha}(Z) > r$, and so $F_{p-1-\dim X} \cB_f \subseteq V^r \cB_f$ and 
\[\de_{t_i} F_{p-1- \dim X} \cB_f \subseteq F_{p-\dim X} \cB_f \subseteq V^{r-1+\lambda}\cB_f \subseteq V^{>r-1}\cB_f\]
so that $F_{p-1- \dim X} {\rm Gr}_V^r(\cB_f) = F_{p-1+\dim X} \cK$, proving the vanishing.

By Lemma \ref{lem-SpectralNumberLowerBound}, we get $m_{\alpha,x}(Q) = 0$ for all $\alpha < p + \lambda = \widetilde{\alpha}_x(Z) - r +1$.

To show equality in the theorem statement, we assume $Z_{\rm sing} = \{x\}$.

We know that for any $\lambda \in (0,1]$, the module $\cQ^{\lambda + \Z}$ is supported on $Z_{\rm sing} \times \A^r_z = \{x\} \times \A^r_z$. We have in this case, by \cite[Prop. 2.1(ii)]{DMS} the equality
\[ F_{p-\dim X} i_x^* \cQ^{\lambda + \Z} = F_{p-\dim X} \cQ^{\lambda+\Z} \neq 0.\]

By the computation above, this lowest Hodge piece satisfies the hypotheses of Lemma \ref{lem-minHodgeGenSupp}. In particular, by the fact that $\xi$ is chosen generally so that $j_\xi$ is non-characteristic, we have that
\[ F_{p-\dim Z} i_\xi^* \cQ^{\lambda + \Z} \neq 0,\]
as this agrees with the $\cO$-module pull-back of $F_{p-\dim X} \cQ^{\lambda+\Z}$ to $\{\xi\}$, which is non-zero as ${\rm Supp}(F_{p-\dim X} \cQ^{\lambda+\Z}) = \A^r$. This finishes the proof.
\end{proof}

We proceed with the proof of the theorem on equisingular families of ICIS subvarieties.

\begin{proof}[Proof of Theorem \ref{thm-equising}]  The proof is essentially immediate from \cite[Theorem 2]{DMS}. We recall the set-up from the introduction: let $\cX \subseteq Y \times T$ be a subvariety which contains $\{0\} \times T$ (for some distinguished point $0\in Y$) and which is a complete intersection of pure codimension $r$. Assume that $\cX_t \coloneqq \cX \cap (Y \times \{t\})$ is a complete intersection subvariety of $Y$ of pure codimension $r$ with an isolated singularity at $0$.

Let $U = \cX \setminus (\{0\} \times T)$ and, as in the theorem statement, assume that $U$ is smooth and that the pair $(\{0\}\times T, U)$ satisfies Whitney's conditions along $\{0\} \times T$. Then we get a Whitney stratification of $N_{\cX} (Y\times T) = \cX \times \A^r_z$ simply by $(\{0\}\times T \times \A^r_z, U \times \A^r_z)$, so that the morphism $N_{\cX} (Y\times T) \to \cX$ is stratified. 

As $U$ is smooth, it is clear that ${\rm Sp}_{\cX}(Y\times T)$ is constant on $U\times \A^r_z$. The additional assumption that ${\rm Sp}_\cX(Y\times T)$ is locally constant along $\{0\} \times T$ ensures that this Whitney stratification satisfies the assumptions of \cite[Theorem 2]{DMS}. Hence, we know that ${\rm Sp}(\cX,(0,t))$ does not depend on $t\in T$. Moreover, by slicing with the transversal $Y\times \{t\}$, we see that this spectrum is equal to ${\rm Sp}(\cX_t,0)$, proving the claim.
\end{proof}

\section{Results on Bernstein-Sato roots} \label{sect-Bernstein}
In this section, $Z \subseteq X$ is defined by $f_1,\dots, f_r \in \cO_X(X)$, the morphism $\Gamma\colon X \to X\times \A^r_t$ is the graph embedding and $\cB_f = \Gamma_+ \cO_X$ the corresponding $\cD$-module on $X\times \A^r_t$.

We begin by proving Theorem \ref{thm-Bernstein}. For this, we take $f_1,\dots, f_r$ to be a regular sequence, so that $Z$ is a complete intersection.

First of all, the $G$-filtration on $\cB_f$ induces one on ${\rm Gr}_V^\gamma \cB_f$ for all $\gamma \in \Q$, hence, a filtration on ${\rm Sp}(\cB_f)^{\lambda + \Z}$ for any $\lambda \in (0,1]$, defined as
\[ G^\bullet {\rm Sp}(\cB_f)^{\lambda + \Z} \coloneqq \bigoplus_{\ell \in \Z} G^{\bullet + \ell} {\rm Gr}_V^{\lambda+r-1+\ell} \cB_f,\]
which is indexed so that this is a filtration by $\cD$-submodules.

It induces a filtration $G^\bullet \cK[z_1,\dots,z_r]$ on the submodule $\cL  = \cK[z_1,\dots, z_r]$.

\begin{lem} \label{GFiltonK} We have $G^0 \cL = \cL$. In particular, for all $i > 0$, we have
\[ {\rm gr}_G^{-i} \cL = 0.\]
\end{lem}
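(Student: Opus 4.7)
The plan is to reduce $G^0\cL=\cL$ to the single inclusion $\cK\subseteq G^0{\rm Gr}_V^r\cB_f$, verify that, and then observe the vanishings. First I unpack the definition of the induced $G$-filtration on $\cL \subseteq {\rm Sp}(\cB_f)^{1+\Z}$: at ${\rm Sp}$-grading $\chi=r+\ell$, the piece $G^0{\rm Sp}(\cB_f)^{1+\Z}$ coincides with $G^{\ell}{\rm Gr}_V^{r+\ell}\cB_f$. Since $\cL^{r+\ell}$ is spanned by $z^\alpha\cdot u$ with $|\alpha|=\ell$ and $u\in\cK$, and since the action of $z^\alpha$ on ${\rm Sp}(\cB_f)$ agrees with the action of $t^\alpha \in V^{|\alpha|}\cD_{X\times\A^r_t}$ on the $V$-graded pieces of $\cB_f$, the multiplicativity $V^{|\alpha|}\cD\cdot G^0\cB_f\subseteq G^{|\alpha|}\cB_f$ yields $t^\alpha\cdot G^0{\rm Gr}_V^r\cB_f\subseteq G^{|\alpha|}{\rm Gr}_V^{r+|\alpha|}\cB_f$. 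Hence, if $\cK\subseteq G^0{\rm Gr}_V^r\cB_f$, then $\cL^{r+\ell}\subseteq G^\ell{\rm Gr}_V^{r+\ell}\cB_f$ for every $\ell\geq 0$, which is precisely $\cL\subseteq G^0\cL$, giving $G^0\cL=\cL$.

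To establish the key inclusion $\cK\subseteq G^0{\rm Gr}_V^r\cB_f$, I would take $u\in\cK$ with a lift $\tilde u\in V^r\cB_f$, and use the defining condition $\partial_{t_i}u=0$ in ${\rm Gr}_V^{r-1}\cB_f$, equivalent to $\partial_{t_i}\tilde u\in V^{>r-1}\cB_f$ for each $i$. The goal is to modify $\tilde u$ by an element of $V^{>r}\cB_f$ so that the result lies in $V^0\cD\cdot\delta_f=G^0\cB_f$; heuristically, any excess terms $\partial_t^\alpha\delta_f$ with $|\alpha|>0$ in $\tilde u$ would produce nonzero contributions to $\partial_{t_i}\tilde u$ modulo $V^{>r-1}$, contradicting the $\cK$-condition. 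A cleaner way to see this is through the Fourier--Laplace description: $\cK$ is the $\overline{\rm FL}$-image of $\Gamma_{X\times\{0\}}\mu(\cB_f^H)$, and sections supported on the zero section on the $\mu$-side naturally correspond, after inverse Fourier transform, to elements built out of $\delta_f$ via operators in $V^0\cD$. Alternatively, one can attempt to use the Koszul surjections in Equations~\ref{KoszulSurj} and~\ref{FilteredSurj} in reverse, exploiting the LCI hypothesis, to propagate the $G^0$-structure from higher $V$-graded pieces back to ${\rm Gr}_V^r\cB_f$.

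Finally, the vanishing ${\rm gr}_G^{-i}\cL=0$ for $i>0$ is immediate from $G^0\cL=\cL$: since the $G$-filtration is decreasing, $G^{-i}\cL\supseteq G^0\cL=\cL$, so $G^{-i}\cL=\cL$ for all $i\geq 0$ and thus ${\rm gr}_G^{-i}\cL = G^{-i}\cL/G^{-i+1}\cL = 0$. The main obstacle is the rigorous verification of $\cK\subseteq G^0{\rm Gr}_V^r\cB_f$, which requires a careful analysis of how the Kashiwara--Malgrange $V$-filtration on $\cB_f$ interacts with the cyclic $V^0\cD$-submodule $V^0\cD\cdot\delta_f=G^0\cB_f$ generated by $\delta_f$; this is where the LCI hypothesis, and in particular the divisibility of $b_f(s)$ by $(s+r)$, must enter non-trivially.
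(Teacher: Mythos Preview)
Your reduction to the single inclusion $\cK\subseteq G^0{\rm Gr}_V^r\cB_f$ is correct and clean, and the final deduction of ${\rm gr}_G^{-i}\cL=0$ is fine. The gap is that you never actually prove this inclusion: you explicitly flag it as ``the main obstacle'' and offer only heuristics. The heuristic that ``excess terms $\partial_t^\alpha\delta_f$ with $|\alpha|>0$ would produce nonzero contributions to $\partial_{t_i}\tilde u$ modulo $V^{>r-1}$'' does not hold up, because a general lift $\tilde u\in V^r\cB_f$ is not a finite $\cO_X$-combination of the $\partial_t^\alpha\delta_f$, and even when it is, coefficients lying in the ideal of $Z$ can cause exactly the cancellations you are trying to rule out. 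The Fourier--Laplace and Koszul remarks likewise do not supply the missing step.

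The paper's proof avoids this algebraic difficulty altogether by a geometric argument. Since $Z$ is LCI, the perverse sheaf underlying $L=i_*\Q_{Z\times\A^r}^H[\dim X]$ is the shifted constant sheaf on $Z\times\A^r$, and so $\cL$ has no nonzero quotient $\cD$-module supported on a proper closed subset of $Z\times\A^r$ (this is Equation~\ref{noQuotient}). Now over the open set $V\subseteq X$ where $Z$ is smooth one checks directly that $G^0{\rm Sp}(\cB_f)={\rm Sp}(\cB_f)$, so the image of the composition $\cL\to{\rm Sp}(\cB_f)\to{\rm Sp}(\cB_f)/G^0{\rm Sp}(\cB_f)$ is a quotient of $\cL$ supported on $Z_{\rm sing}\times\A^r$, hence zero. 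This yields $\cL\subseteq G^0{\rm Sp}(\cB_f)$, i.e.\ $G^0\cL=\cL$, and in particular gives the inclusion $\cK\subseteq G^0{\rm Gr}_V^r\cB_f$ you were after, without any lifting argument.
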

\begin{proof} Recall from Equation \ref{noQuotient} that $\cK[z_1,\dots,z_r]$ has no non-zero quotients which are supported on a proper closed subset of $Z\times \A^r_z$. Hence, the result follows by restricting to $V \subseteq X$ such that $V \cap Z = Z_{\rm reg}$, because it is easy to check that if $Z$ is smooth, then $G^0 {\rm Sp}(\cB_f) = {\rm Sp}(\cB_f)$. Thus, if we look at the composition $\cL \to {\rm Sp}(\cB_f) \to {\rm Sp}(\cB_f)/G^0 {\rm Sp}(\cB_f)$, the image vanishes upon restriction to $V$, hence it is 0.
\end{proof}

Let $\widetilde{\gamma}(Z) \coloneqq \min \{\gamma \mid \widetilde{b}_f(-\gamma) = 0\}$. By \cite[Theorem 1.6]{CDMO}, we have
\[ \widetilde{\gamma}(Z) \leq \widetilde{\alpha}(Z) \text{ and } \min\{r+1,\widetilde{\gamma}(Z)\} = \min\{r+1,\widetilde{\alpha}(Z)\},\]
so the result of the theorem is already known if either $\widetilde{\alpha}(Z)$ or $\widetilde{\gamma}(Z)$ is strictly less than $r+1$.

Thus, we assume $\widetilde{\gamma}(Z) \geq r+1$. The next lemma allows us to assume $\widetilde{\gamma}(Z)$ is an integer:
\begin{lem} Let $Y = X\times \A^r_y$, $U = Y \setminus (X\times \{0\})$ and $g = \sum_{i=1}^r y_i f_i$. Then there exists a finite set $I \subseteq \Z_{\geq 0}$ such that
\[ b_g(s) = b_{g\vert U}(s) \prod_{i\in I} (s+r+i).\]
\end{lem}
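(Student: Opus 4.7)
The plan is in two parts: establish $b_{g\vert_U}(s) \mid b_g(s)$, then identify the quotient $q(s) \coloneqq b_g(s)/b_{g\vert_U}(s)$. The first part is routine: any functional equation $b_g(s) \cdot g^s = P(s) \cdot g^{s+1}$ on $Y$ restricts to $U$, so $b_{g\vert_U}(s)$ divides $b_g(s)$ by minimality. By Equation \ref{GFiltRoots}, a number $-\lambda$ is a root of $b_g(s)$ (resp.\ $b_{g\vert_U}(s)$) precisely when ${\rm Gr}_V^\lambda {\rm gr}_G^0 \cB_g$ is nonzero (resp.\ is nonzero after restriction to $U \times \A^1_t$). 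Hence the roots of $q(s)$ correspond to those $\lambda$ for which ${\rm Gr}_V^\lambda {\rm gr}_G^0 \cB_g$ has a nontrivial part supported on $W \times \A^1_t$, where $W \coloneqq X \times \{0\} \subseteq Y$.

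The central idea is to exploit that $g = \sum_i y_i f_i$ is of $y$-degree $1$. With $\theta_y \coloneqq \sum_{i=1}^r y_i \partial_{y_i}$ and $s \coloneqq -\partial_t t$, I claim the operator $L \coloneqq \theta_y - s$ acts on the natural $\cO_X$-basis element $y^\beta \partial_t^k \delta_g$ of $\cB_g$ by multiplication by the integer $|\beta| - k$. This is verified by a short direct computation: using $\partial_{y_i} \delta_g = -f_i \partial_t \delta_g$ one checks $\theta_y \delta_g = -g \partial_t \delta_g = s \delta_g$, so $L \delta_g = 0$; then the cases $y^\beta \delta_g$ and $\partial_t^k \delta_g$ follow from the commutation relations $[L, y_i] = y_i$ and $[L, \partial_t] = -\partial_t$, together with $\cO_X \subseteq \ker \theta_y$. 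Consequently $L$ gives a $\Z$-grading on $\cB_g$ compatible with the filtration data (each basic $\cD$-generator shifts $L$ by a fixed integer), and this grading passes to ${\rm gr}_G^0 \cB_g$.

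Next, on any $\cD$-module subquotient supported on $W \times \A^1_t$, Kashiwara's equivalence (Example \ref{KashiwaraEq}) yields a decomposition $\bigoplus_{\alpha \in \N^r} \cN \partial_y^\alpha$ on which $\theta_y$ acts with eigenvalue $-(|\alpha|+r)$. Combined with the fact that $s$ acts by $-\lambda$ on ${\rm Gr}_V^\lambda$, the identity $s = \theta_y - L$ yields, on a $W$-supported section in the $L$-eigenspace with eigenvalue $m$, the equality $\lambda = m + r + |\alpha|$. Therefore, provided $m$ is a non-negative integer on the relevant sections, every root of $q(s)$ has the form $-(r+i)$ for some $i \in \Z_{\geq 0}$, which is exactly what we need.

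The main obstacle I foresee is the verification that $m \in \Z_{\geq 0}$ on the sections contributing to $q(s)$. The plan here is to use that ${\rm gr}_G^0 \cB_g$ is cyclic over $V^0 \cD_{Y \times \A^1_t}$ with generator $[\delta_g]$ of $L$-eigenvalue $0$, together with a careful bookkeeping of how the generators of $V^0 \cD_{Y \times \A^1_t}$ shift $L$: multiplication by $\cO_X$, by $t$, and by $\partial_t t$ preserves or raises $L$, while the lowering operator $\partial_{y_i}$ is constrained on sections annihilated by powers of the ideal $(y_1, \dots, y_r)$ (since producing such annihilations requires a matching application of $y_i$, which raises $L$). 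Once $m \geq 0$ is established, and one verifies that each root of $q(s)$ occurs with multiplicity one (so as to obtain a product indexed by a set $I$ rather than a multiset), the lemma follows.
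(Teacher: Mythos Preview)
Your approach shares the two essential ingredients with the paper's proof --- the identity $\theta_y\delta_g = s\delta_g$ (equivalently $L\delta_g = 0$) and Kashiwara's equivalence for modules supported on $W = X\times\{0\}$ --- but the step you yourself flag as the main obstacle is a genuine gap, and your proposed fix does not work. The claim ``$m\geq 0$ on the relevant sections'' is false in general: if $u$ lies in $\cN = \bigcap_i\ker(y_i)$ inside the $W$-supported part and has $L$-eigenvalue $0$, then $\partial_y^\alpha u$ is again $W$-supported with $L$-eigenvalue $-|\alpha|<0$. Your heuristic that producing annihilation by $(y_1,\dots,y_r)$ forces matching applications of $y_i$ is not valid here, since on $\cB_g$ one has $\partial_{y_i}\delta_g = -f_i\partial_t\delta_g$, so $\partial_{y_i}$ can be applied freely without any balancing. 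Your framing via \emph{all} $W$-supported sections of ${\rm Gr}_V^\lambda{\rm gr}_G^0\cB_g$ also does not correctly isolate the roots of $q(s)$ with their multiplicities; it only gives a superset.

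The paper bypasses this entirely with a single extra observation: since $s$ is central in ${\rm Gr}_V^0\cD_{Y\times\A^1}$, the minimal polynomial of $s$ on the cyclic module $p(s)\,{\rm gr}_G^0\cB_g$ (where $p = b_{g\vert_U}$) equals the minimal polynomial of $s$ on the single element $p(s)\delta_g$. On that element $\theta_y$ and $s$ coincide (this is exactly your computation $L\delta_g=0$, together with $[\theta_y,s]=0$), so one is reduced to the minimal polynomial of $\theta_y$ on $p(s)\delta_g$. Since $p(s)\,{\rm gr}_G^0\cB_g$ is supported on $W$, Kashiwara's equivalence makes $\theta_y$ semisimple with eigenvalues in $-(r+\Z_{\geq 0})$, which simultaneously gives the form of the roots and the multiplicity-one statement. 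In your language: you do not need $m\geq 0$ on every $W$-supported section, only $m=0$ on the cyclic generator, and that you have already computed.
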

\begin{proof} Let $p(s) = b_{g\vert U}(s)$, so that, by definition, $p(s) {\rm gr}_G^0 \cB_{g\vert_U} = 0$. Note that $\cB_{g\vert U} = (\cB_g)\vert_U$ and the same is true of the $G$-filtration.

Thus, we see that the submodule $p(s) {\rm gr}_G^0 \cB_g \subseteq {\rm gr}_G^0 \cB_g$, when restricted to $U$, is 0. Thus, it is supported on $X \times \{0\}$. 

Now, let $\delta_g$ be the distinguished generator of $\cB_g$. It is easy to see that $\theta_y \delta_g = s \delta_g$, where $s = -\de_{\zeta} \zeta$. Moreover, by definition of the $G$-filtration, $V^0\cD \cdot \delta_g = G^0\cB_g$, so ${\rm gr}_G^0 \cB_g$ is generated by the class of $\delta_g$ over ${\rm Gr}_V^0\cD = \cD_Y[s]$. Hence, the minimal polynomial of the $s$ action on $\delta_g$ agrees with $b_g(s)$. By what we have just observed, this is also equal to the minimal polynomial of the $\theta_y$ action on $\delta_g$.

As $\theta_y$ commutes with $s$, we see that $\theta_y p(s) \delta_g = p(s) \theta_y \delta_g = s p(s)\delta_g$, and $p(s)\delta_g$ generates $p(s) {\rm gr}_G^0\cB_g$. Hence, the minimal polynomial of the $s$ action on $p(s) {\rm gr}_G^0 \cB_g$ is the same as the minimal polynomial of the $\theta_y$-action on $p(s)\delta_g$.

Finally, Kashiwara's equivalence \ref{KashiwaraEq} shows that the eigenvalues of $\theta_y$ on a module supported on $\{y_1 = \dots = y_r =0\}$ lie in $\Z_{\leq -r}$. and if $u$ is a generalized eigenvector, it is actually an eigenvector. Putting this together, we see that the minimal polynomial $q(w)$ of the $\theta_y$-action on $p(s)\delta_g$ in $p(s) {\rm gr}_G^0 \cB_g$ is of the form
\[ q(w) = \prod_{i\in I} (w+r+i)\]
for some finite set $I \in \Z_{\geq 0}$.
\end{proof}

By dividing both sides in the Lemma statement by $(s+1)$, we get equality (using \cite[Thm. 1.1]{Mustata21})
\[ b_f(s) = \widetilde{b}_{g\vert U}(s) \prod_{i\in I} (s+r+i),\]
and so we see that if $\widetilde{\gamma}(Z) \notin \Z$, then it must agree with $\widetilde{\alpha}(Z)$. So we can take $\widetilde{\gamma}(Z) = r + j$ for some $j\in \Z_{\geq 1}$.

\begin{proof}[Finishing the proof of Theorem \ref{thm-Bernstein}] Assume toward contradiction that $\widetilde{\alpha}(Z) > \widetilde{\gamma}(Z) = r+j$. The assumption on $\widetilde{\alpha}(Z)$ tells us that for all $|\beta| = j$, we have
\[ \de_t^{\beta}\delta_f \in V^r \cB_f \text{ and } \partial_{t_i} \de_t^\beta \delta_f \in V^{>r-1}\cB_f\]
for all $1\leq i\leq r$. Hence, the class $[\de_t^\beta \delta_f] \in {\rm Gr}_V^r \cB_f$ lies in $\cK$ for all $|\beta| = j$.

For $\ell \geq 0$, we have $G^{-\ell } \cB_f = \sum_{|\gamma| \leq \ell} V^0 \cD_{X\times \A^r_t} \cdot \partial_t^\gamma \delta_f$, by definition. Hence, we see that, for all $0 \leq \ell \leq j$, the subspace
\[ G^{-\ell} {\rm Gr}_V^r \cB_f \subseteq {\rm Gr}_V^r \cB_f\]
is generated over ${\rm Gr}_V^0 \cD_{X\times \A^r_t}$ by the classes $[\partial_t^\gamma \delta_f]$ for $|\gamma| \leq \ell$.

It is easy to check that $\cK \subseteq {\rm Gr}_V^r \cB_f$ is a ${\rm Gr}_V^0\cD_{X\times \A^r_t}$-submodule, and so putting this together we conclude that
\[ G^{-\ell} \cK = G^{-\ell} {\rm Gr}_V^{r} \cB_f,\]
for all $0\leq \ell \leq j$, and thus, by iterating the surjections \ref{KoszulSurj}, we conclude by definition of the $G$-filtrations that
\[ G^{-\ell} \cK[z_1,\dots, z_r] = G^{-\ell} {\rm Sp}(\cB_f) \text{ for } 0 \leq \ell \leq j.\]

But by Lemma \ref{GFiltonK}, this shows that ${\rm gr}_G^{-j} {\rm Sp}(\cB_f) = 0$. By looking at the $r+j$th monodromic piece, we conclude that
\[ {\rm gr}_G^0 {\rm Gr}_V^{r+j} \cB_f =0,\]
and so $(s+r+j) \nmid b_f(s)$ by the equivalence \ref{GFiltRoots}.
\end{proof}

\begin{proof}[Proof of Theorem \ref{thm-Construct}] As ${\rm Sp}(\cB_f)\vert_{S_j}$ is locally constant, the same is true for all sub-quotients. In particular, it is true for
\[ \cN_\gamma : =  \cD_{X\times \A^r_t} \cdot {\rm gr}_G^0 {\rm Gr}_V^\gamma \cB_f,\]
and, if we define the $\cD$-linear map $N: {\rm Sp}(\cB_f) \to {\rm Sp}(\cB_f)$ by having it act on the $\chi$th monodromic piece by $s+\chi$, we see that $N$ preserves the subspaces $\cN_\gamma$ for any $\gamma \in \Q$ and
\[ N^k \cN_\gamma \text{ is locally constant on } S_j \text{ for all } k \geq 0.\]

The claim follows immediately from this observation and the fact that $N^k \cN_\gamma \neq 0$ if and only if $(s+\gamma)^k {\rm gr}_G^0 {\rm Gr}_V^\gamma \cB_f \neq 0$ if and only if $(s+\gamma)^{k+1} \mid b_f(s)$, using equivalence \ref{GFiltRoots}.
\end{proof}

\section*{References}
\begin{biblist}

\bib{AGZV}{book}{
      author={Arnol'd, V.~I.},
      author={Gusein-Zade, S.~M.},
      author={Varchenko, A.~N.},
       title={Singularities of differentiable maps, {Vol 2}},
      series={Monographs in Mathematics},
   publisher={{Birkh\"{a}user} Boston, MA},
        date={1988},
      volume={83},
      number={1},
}

\bib{BBDG}{article}{
      author={Beilinson, Alexander},
      author={Bernstein, Joseph},
      author={Deligne, Pierre},
      author={Gabber, Ofer},
       title={Faisceaux pervers},
        date={1982},
     journal={{Ast\'{e}risque}},
      volume={100},
       pages={1\ndash 171},
}

\bib{BrianconSpederWhitney}{article}{
      author={Brian{\c{c}}on, Jo{\"{e}}l},
      author={Speder, Jean-Paul},
       title={Les conditions de {Whitney} impliquent {$\mu^*$} constant},
        date={1976},
     journal={Ann. Inst. Fourier},
      volume={26},
       pages={153\ndash 163},
}

\bib{Brylinski}{article}{
      author={Brylinski, {Jean-Luc}},
       title={Transformations canoniques, dualit\'{e} projective, th\'{e}orie
  de {Lefschetz}, transformations de {Fourier} et sommes trigonom\'{e}triques},
        date={1986},
     journal={Ast\'{e}risque},
      volume={140},
      number={141},
       pages={3\ndash 134},
}

\bib{BMS}{article}{
      author={Budur, Nero},
      author={{Musta\c{t}\u{a}}, Mircea},
      author={Saito, Morihiko},
       title={Bernstein-{Sato} polynomials of arbitrary varieties},
        date={2006},
     journal={Compos. Math.},
      volume={142},
      number={3},
       pages={779\ndash 797},
}

\bib{CD}{article}{
      author={Chen, Qianyu},
      author={Dirks, Bradley},
       title={On {V}-filtration, {Hodge} filtration and {Fourier} transform},
        date={2023},
     journal={Selecta Math. (N.S.)},
      volume={29},
      number={4},
}

\bib{CDM}{article}{
      author={Chen, Qianyu},
      author={Dirks, Bradley},
      author={{Musta\c{t}\u{a}}, Mircea},
       title={The minimal exponent and {$k$}-rationality for local complete
  intersections},
        date={2022},
     journal={arXiv math.AG},
      volume={2212.01898},
}

\bib{CDMO}{article}{
      author={Chen, Qianyu},
      author={Dirks, Bradley},
      author={{Musta\c{t}\u{a}}, Mircea},
      author={Olano, {Sebasti\'{a}n}},
       title={V-filtrations and minimal exponents for local complete
  intersection singularities},
        date={2022},
     journal={arXiv math.AG},
      volume={2208.03277},
}

\bib{CDS}{article}{
      author={Chen, Qianyu},
      author={Dirks, Bradley},
      author={Saito, Morihiko},
       title={Verdier specialization and restrictions of {H}odge modules},
        date={2023},
     journal={Submitted},
}

\bib{Dimca}{book}{
      author={Dimca, Alexandru},
       title={Sheaves in topology},
   publisher={Springer-Verlag},
        date={2004},
}

\bib{DMS}{article}{
      author={Dimca, Alexandru},
      author={Maisonobe, Philippe},
      author={Saito, Morihiko},
       title={Spectrum and multiplier ideals of arbitrary subvarieties},
        date={2011},
     journal={Ann. Inst. Fourier},
      volume={61},
      number={4},
       pages={1633\ndash 1653},
}

\bib{FL2}{article}{
      author={Friedman, Robert},
      author={Laza, Radu},
       title={Higher {Du} {Bois} and higher rational singularities},
        date={2022},
     journal={arXiv math.AG},
      volume={2205.04729},
}

\bib{GaffneyICIS}{article}{
      author={Gaffney, Terence},
       title={Multiplicites and equisingularity of {ICIS} germs},
        date={1996},
     journal={Invent. Math.},
      volume={123},
       pages={209\ndash 220},
}

\bib{HTT}{book}{
      author={Hotta, Riyoshi},
      author={Takeuchi, Kiyoshi},
      author={Tanisaki, Toshiyki},
       title={D-{Modules}, {Perverse} {Sheaves}, and {Representation}
  {Theory}},
   publisher={{Birkha\"{u}ser} Boston},
        date={2008},
}

\bib{DBSaito}{article}{
      author={Jung, Seung-Jo},
      author={Kim, In-Kyun},
      author={Saito, Morihiko},
      author={Yoon, Youngho},
       title={Higher {Du} {Bois} singularities of hypersurfaces},
        date={2022},
     journal={Proc. Lond. Math. Soc.},
      volume={125},
      number={3},
       pages={543\ndash 567},
}

\bib{KollarPairs}{proceedings}{
      author={Koll\'{a}r, J\'{a}nos},
       title={Singularities of {Pairs}},
      series={Proc. Sympos. Pure Math},
   publisher={Amer. Math. Soc.},
     address={Providence, RI},
        date={1997},
      volume={62},
}

\bib{KashShap}{book}{
      author={Kashiwara, Masaki},
      author={Schapira, Pierre},
       title={Sheaves on {Manifolds}},
     edition={1},
      series={Grundlehren der mathematischen Wissenschaften},
   publisher={Springer Berlin, Heidelberg},
        date={1990},
      volume={292},
}

\bib{LazII}{book}{
      author={Lazarsfeld, Robert},
       title={Postivity in {Algebraic} {Geometry} {II}},
      series={Ergebnisse der Mathematik und ihrer Grenzgebiete},
   publisher={Springer-Verlag},
     address={Berlin},
        date={2004},
      volume={49},
}

\bib{Lichtin}{article}{
      author={Lichtin, Benjamin},
       title={Poles of {$|f (z, w)|^2s$} and roots of the b-function},
        date={1989},
     journal={Ark. Mat.},
      volume={27},
      number={2},
       pages={283\ndash 304},
}

\bib{MMDuality}{article}{
      author={Maisonobe, Phillipe},
      author={Mebkhout, Zoghman},
       title={Le th\'{e}or\`{e}me de comparaison pour les cycles
  \'{e}vanescents},
        date={2004},
     journal={El\'{e}ments de la th\'{e}orie des syst\`{e}mes diff\'{e}rentiels
  g\'{e}om\'{e}triques},
       pages={311\ndash 389},
}

\bib{MOPW}{article}{
      author={{Musta\c{t}\u{a}}, Mircea},
      author={Olano, {Sebasti\'{a}n}},
      author={Popa, Mihnea},
      author={Witaszek, Jakub},
       title={The {Du} {Bois} complex of a hypersurface and the minimal
  exponent},
        date={2023},
     journal={Duke Math. J.},
      volume={172},
      number={7},
       pages={1411\ndash 1436},
}

\bib{MP2}{article}{
      author={{Musta\c{t}\u{a}}, Mircea},
      author={Popa, Mihnea},
       title={Hodge ideals for {Q}-divisors, {V}-filtration, and minimal
  exponent},
        date={2020},
     journal={Forum Math. Sigma},
      volume={8},
      number={e19},
}

\bib{MP3}{article}{
      author={{Musta\c{t}\u{a}}, Mircea},
      author={Popa, Mihnea},
       title={Hodge filtration on local cohomology, {Du} {Bois} complex, and
  local cohomological dimension},
        date={2022},
     journal={Forum Math. Pi},
      volume={10},
      number={e22},
       pages={58 pp.},
}

\bib{MPDB}{article}{
      author={{Musta\c{t}\u{a}}, Mircea},
      author={Popa, Mihnea},
       title={On {$k$-rational} and {$k$-Du Bois} local complete
  intersections},
        date={2022},
     journal={arXiv math.AG},
      volume={2207.08743},
}

\bib{Mustata21}{article}{
      author={{Musta\c{t}\u{a}}, Mircea},
       title={Bernstein-{Sato} polynomials for general ideals vs. principal
  ideals},
        date={2022},
     journal={Proc. Amer. Math. Soc. (to appear)},
}

\bib{SabbahOrder}{article}{
      author={Sabbah, Claude},
       title={{$\mathcal D$}-modules et cycles \'{e}vanescents (d'apr\`{e}s
  {Kashiwara} et {Malgrange})},
        date={1987},
     journal={G\'{e}om\'{e}trie alg\'{e}brique et applications III (La Rabida,
  1984)},
      volume={24},
       pages={53\ndash 98},
}

\bib{SaitoMHP}{article}{
      author={Saito, Morihiko},
       title={Modules de {Hodge} {Polarisables}},
        date={1988},
     journal={Publ. Res. Inst. Math. Sci.},
      volume={24},
      number={6},
       pages={849\ndash 995},
}

\bib{SaitoMHM}{article}{
      author={Saito, Morihiko},
       title={Mixed {Hodge} {Modules}},
        date={1990},
     journal={Publ. Res. Inst. Math. Sci.},
      volume={26},
      number={2},
       pages={221\ndash 333},
}

\bib{SaitoMicrolocal}{article}{
      author={Saito, Morihiko},
       title={On microlocal b-function},
        date={1994},
     journal={Bull. Soc. Math. France},
      volume={122},
      number={2},
       pages={163\ndash 184},
}

\bib{SaitoHodgeIdeal}{article}{
      author={Saito, Morihiko},
       title={Hodge ideals and microlocal {V}-filtration},
        date={2016},
     journal={arxiv},
      volume={1612.08667},
}

\bib{SaitoRationalPowers}{article}{
      author={Saito, Morihiko},
       title={D-modules generated by rational powers of holomorphic functions},
        date={2021},
     journal={Publ. Res. Inst. Math. Sci.},
      volume={57},
      number={3},
       pages={867\ndash 891},
}

\bib{MonoMHM2}{article}{
      author={Saito, Takahiro},
       title={The {Hodge} filtrations of monodromic mixed {Hodge} modules and
  the irregular {Hodge} filtrations},
        date={2022},
     journal={arxiv: 2204.13381},
}

\bib{Schnell}{book}{
      author={Schnell, Christian},
       title={An overview of {Morihiko} {Saito}'s theory of mixed {Hodge}
  modules},
   publisher={Int. Press, Somerville, MA},
        date={2014},
      number={1405.3096},
}

\bib{SteenbrinkSemicont}{article}{
      author={Steenbrink, J. H.~M.},
       title={Semicontinuity of the singularity spectrum},
        date={1985},
     journal={Inv. Math.},
      volume={79},
       pages={557\ndash 565},
}

\bib{SteenbrinkSpec1}{article}{
      author={Steenbrink, J. H.~M.},
       title={The spectrum of hypersurface singularity},
        date={1989},
     journal={{Ast\'{e}risque}},
      volume={179-180},
       pages={163\ndash 184},
}

\bib{TeissierHypersurface}{incollection}{
      author={Teissier, Bernard},
       title={Cycles {\'{e}vanescents, sections planes et conditions de
  whitney}},
        date={1973},
   booktitle={{Singularit\'{e}s} {\`{a}} {Carg\`{e}se}},
      series={Ast\'{e}risque},
       pages={285\ndash 362},
}

\bib{VarchenkoSemicont}{article}{
      author={Varchenko, A.~N.},
       title={On semi-continuity of the spectrum and an upper bound for the
  number of singular points in projective hypersurfaces},
        date={1983},
     journal={Doklady Ak. Nauk},
      volume={270},
      number={6},
       pages={1294\ndash 1297},
}

\end{biblist}

\end{document}